\DeclareMathOperator{\RE}{Re}
\numberwithin{equation}{section}
\newtheorem{theorem}{Theorem}[section]
\newtheorem{lemma}[theorem]{Lemma}
\newtheorem{corollary}[theorem]{Corollary}
\theoremstyle{remark}
\begin{document}

\title{A subclass of close-to-convex harmonic mappings }

\thanks{The research work of the first author is supported by research fellowship from Council of Scientific and
Industrial Research (CSIR), New Delhi.}

\author[S. Nagpal]{\bf Sumit Nagpal}
\address{\rm Department of Mathematics\\
 University of Delhi,  Delhi--110 007, India}
\email{sumitnagpal.du@gmail.com}

\author[V. Ravichandran]{\bf V. Ravichandran}

\address{\rm Department of Mathematics\\
 University of Delhi, Delhi--110 007, India; \\
School of Mathematical Sciences\\
Universiti Sains Malaysia, 11800 USM, Penang, Malaysia}
\email{vravi68@gmail.com}

\begin{abstract}
A subclass of complex-valued close-to-convex harmonic functions that are univalent and sense-preserving in the open unit disc is investigated. The coefficient estimates, growth results, area theorem, boundary behavior, convolution and convex combination properties for the above family of harmonic functions are obtained. \end{abstract}

\keywords{univalent harmonic mappings, starlike, convex, close-to-convex.}
\subjclass[2010]{30C80}

 \maketitle

\section{Introduction}
Let $\mathcal{H}$ denote the class of all complex-valued harmonic functions $f$ in the unit disk $\mathbb{D}=\{z \in \mathbb{C}:|z|<1\}$ normalized by $f(0)=0=f_{z}(0)-1$. Let $\mathcal{S}_{H}$ be the subclass of $\mathcal{H}$ consisting of univalent and sense-preserving functions. Such functions can be written in the form $f=h+\bar{g}$, where
\begin{equation}\label{eq1.1}
h(z)=z+\sum_{n=2}^{\infty}a_{n}z^{n}\quad\mbox{and}\quad g(z)=\sum_{n=1}^{\infty}b_{n}z^n
\end{equation}
are analytic in $\mathbb{D}$ and the Jacobian $J_{f}(z)=|h'(z)|^2-|g'(z)|^2$ is positive or equivalently $|g'(z)|<|h'(z)| $ in $\mathbb{D}$.
The classical family $\mathcal{S}$ of normalized analytic univalent functions is a subclass of $\mathcal{S}_{H}$. The family of all functions $f \in \mathcal{S}_{H}$ with the additional property that $f_{\bar{z}}(0)=0$ is denoted by $\mathcal{S}_{H}^{0}$. Both $\mathcal{S}_{H}$ and $\mathcal{S}_{H}^{0}$ are normal families. But $\mathcal{S}_{H}^{0}$ is the only compact family with respect to the topology of locally uniform convergence.

In $1984$, Clunie and Sheil-Small (see \cite{cluniesheilsmall}) investigated the class $\mathcal{S}_{H}$ as well as its geometric subclasses and its properties. Since then, there have been several studies related to the class $\mathcal{S}_{H}$ and their subclasses. In particular, Jahangiri \cite{jahangiriconvex,jahangiristarlike} discussed the subclasses of $\mathcal{S}_{H}$ consisting of functions which are starlike of order $\alpha$ and convex of order $\alpha$, for $0\leq \alpha <1$. Recall that a function $f\in \mathcal{H}$ is said to be starlike of order $\alpha$, $0\leq \alpha <1$ if for any $r\in (0,1)$, we have
\[\frac{\partial}{\partial \theta}\arg f(r e^{i \theta})> \alpha,\quad 0\leq\theta\leq 2\pi.\]
Similarly, $f\in \mathcal{H}$ is said to be convex of order $\alpha$, $0\leq \alpha <1$ if
\[\frac{\partial}{\partial \theta}\left(\arg \left\{\frac{\partial}{\partial \theta}f(r e^{i \theta})\right\}\right)> \alpha,\quad 0\leq\theta\leq 2\pi,\quad 0<r<1.\]

Let $\mathcal{S}_{H}^{*}$, $\mathcal{K}_{H}$ and $\mathcal{C}_{H}$ be the subclasses of $\mathcal{S}_{H}$ mapping $\mathbb{D}$ onto starlike, convex and close-to-convex domains, respectively, just as $\mathcal{S}^{*}$, $\mathcal{K}$ and $\mathcal{C}$ are the subclasses of $\mathcal{S}$ mapping $\mathbb{D}$ onto their respective domains. Denote by $\mathcal{S}_{H}^{*0}$, $\mathcal{K}_{H}^{0}$ and $\mathcal{C}_{H}^{0}$, the classes consisting of those functions $f$ in $\mathcal{S}_{H}^{*}$, $\mathcal{K}_{H}$ and $\mathcal{C}_{H}$ respectively, for which $f_{\bar{z}}(0)=0$. The coefficient bounds and properties of these subclasses are being discussed in \cite{cluniesheilsmall,duren,sheilsmall,wang}.

In \cite{macgregor}, MacGregor studied a subclass of close-to-convex analytic functions. Denote by $\mathcal{F}$ the class consisting of analytic functions which satisfy $|f'(z)-1|<1$ for $z \in \mathbb{D}$ and are normalized by $f(0)=0=f'(0)-1$. In \cite{ponnusamy}, Kalaj, Ponnusamy and Vuorinen introduced a similar subclass of close-to-convex harmonic mappings. Let $\mathcal{F}_{H}$ be the subclass of $\mathcal{H}$ defined by the condition
\[|f_{z}(z)-1|<1-|f_{\bar{z}}(z)|,\quad z\in \mathbb{D}.\]
Observe that $\mathcal{F}_{H}\subset \mathcal{C}_{H}$ and $\mathcal{F}_{H}$ reduces to $\mathcal{F}$ if the co-analytic part of $f$ is zero. Regarding the functions in class $\mathcal{F}_{H}$, following results have been proved in \cite{ponnusamy}.

\begin{lemma}\label{lem1.1}
Let $f \in \mathcal{F}_{H}$ where $h$ and $g$ are given by \eqref{eq1.1}. Then
\begin{itemize}
  \item [(a)] $f$ is close-to-convex in $\mathbb{D}$.
  \item [(b)] $||a_n|-|b_n||\leq 1/n$ for $n \geq 2$ whenever $b_1=0$.
  \item [(c)] $\sum_{n=2}^{\infty}n^{2}(|a_n|^{2}+|b_n|^{2})\leq 1-|b_{1}|^{2}$.
\end{itemize}
\end{lemma}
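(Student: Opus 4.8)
The plan is to exploit the fact that the defining inequality of $\mathcal{F}_H$ is tailor-made to control the analytic shears $h+\epsilon g$. Writing $f_z=h'$ and $f_{\bar z}=\overline{g'}$, the condition reads $|h'(z)-1|<1-|g'(z)|$ for $z\in\mathbb{D}$; in particular $|g'(z)|<1$, and by the reverse triangle inequality $|g'(z)|<1-|h'(z)-1|\le|h'(z)|$, so $f$ is sense-preserving. For (a) I would fix $\epsilon$ with $|\epsilon|\le1$ and put $F_\epsilon=h+\epsilon g$; then
\[
|F_\epsilon'(z)-1|=|(h'(z)-1)+\epsilon g'(z)|\le|h'(z)-1|+|g'(z)|<1,
\]
so $F_\epsilon'$ maps $\mathbb{D}$ into the disk $|w-1|<1$ and hence into the half-plane $\RE w>0$. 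By the Noshiro--Warschawski theorem $F_\epsilon$ is univalent, and having derivative of positive real part it is close-to-convex (with respect to the identity). Invoking the Clunie--Sheil-Small criterion that a sense-preserving $h+\bar g$ is close-to-convex whenever $h+\epsilon g$ is close-to-convex for every $|\epsilon|=1$ then completes (a).

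For (b) I would read off coefficients from the same family. With $b_1=0$ the function $\psi_\epsilon(z):=F_\epsilon'(z)-1=\sum_{n=2}^{\infty}n(a_n+\epsilon b_n)z^{n-1}$ satisfies $\psi_\epsilon(0)=0$ and $|\psi_\epsilon|<1$, so Schwarz's lemma (or directly Cauchy's estimate) bounds each Taylor coefficient by $1$, giving $n\,|a_n+\epsilon b_n|\le1$, that is $|a_n+\epsilon b_n|\le 1/n$ for every $|\epsilon|=1$. Choosing $\epsilon$ on the unit circle so that $\epsilon b_n$ is antiparallel to $a_n$ realizes the minimum modulus $|a_n+\epsilon b_n|=\big||a_n|-|b_n|\big|$, and the bound $\big||a_n|-|b_n|\big|\le1/n$ follows.

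For (c) the key is a pointwise inequality obtained by squaring. Since both sides of $|h'(z)-1|<1-|g'(z)|$ are nonnegative and $0\le|g'(z)|<1$, I obtain
\[
|h'(z)-1|^2+|g'(z)|^2<\big(1-|g'(z)|\big)^2+|g'(z)|^2=1-2|g'(z)|\big(1-|g'(z)|\big)\le1 .
\]
Integrating over the circle $|z|=r$ and applying Parseval's formula to $h'(z)-1=\sum_{n\ge2}na_nz^{n-1}$ and $g'(z)=b_1+\sum_{n\ge2}nb_nz^{n-1}$ gives
\[
\sum_{n=2}^{\infty}n^2\big(|a_n|^2+|b_n|^2\big)r^{2(n-1)}\le1-|b_1|^2 ,
\]
and letting $r\to1^-$ yields the stated area-type bound.

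I expect the only genuine obstacle to be the correct invocation of the Clunie--Sheil-Small lifting result in (a): the analytic estimates are elementary (triangle inequality, Schwarz/Cauchy, Parseval), but the passage from close-to-convexity of the shears $h+\epsilon g$ to close-to-convexity of the harmonic map $f$ is where the real content lies and must be quoted rather than reproved. The remaining care is bookkeeping, namely that the squaring step in (c) is valid because both sides are nonnegative, and that the rotation chosen in (b) indeed attains the minimum modulus.
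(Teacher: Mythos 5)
Your argument is correct in all three parts: the shear functions $F_\epsilon=h+\epsilon g$ with $\RE F_\epsilon'>0$ plus the Clunie--Sheil-Small lifting lemma give (a), the Cauchy/Schwarz coefficient estimate with an optimally rotated $\epsilon$ gives (b), and the squared pointwise inequality followed by Parseval gives (c). Note that the paper itself offers no proof of this lemma --- it is quoted verbatim from the cited work of Kalaj, Ponnusamy and Vuorinen --- but your reasoning is precisely the standard argument used there, so there is nothing to fault beyond observing that the hypothesis $b_1=0$ in (b) is not actually needed for your Cauchy-estimate step (it only cleans up the statement).
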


\begin{lemma}\label{lem1.2}
Let $f=h+\bar{g}\in \mathcal{H}$ where $h$ and $g$ are given by \eqref{eq1.1}, and satisfy the condition
\[\sum_{n=2}^{\infty}n(|a_n|+|b_n|)\leq 1-|b_{1}|.\]
Then $f \in \mathcal{F}_{H}$.
\end{lemma}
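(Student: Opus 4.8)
The plan is to verify the defining inequality of $\mathcal{F}_H$ directly by majorizing the relevant power series. Since $f=h+\bar g$ with $h,g$ analytic, one has $f_z(z)=h'(z)$ and $f_{\bar z}(z)=\overline{g'(z)}$, so that $|f_{\bar z}(z)|=|g'(z)|$. Hence proving $f\in\mathcal{F}_H$ reduces to establishing
\[
|h'(z)-1|+|g'(z)|<1\qquad\text{for all }z\in\mathbb{D}.
\]

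First I would differentiate the series in \eqref{eq1.1} to obtain $h'(z)-1=\sum_{n=2}^{\infty} n a_n z^{n-1}$ and $g'(z)=b_1+\sum_{n=2}^{\infty} n b_n z^{n-1}$. Fixing $z$ with $|z|=r<1$ and applying the triangle inequality termwise gives
\[
|h'(z)-1|+|g'(z)|\;\le\;|b_1|+\sum_{n=2}^{\infty} n\bigl(|a_n|+|b_n|\bigr)r^{n-1}.
\]
Since $0\le r<1$ forces $r^{n-1}\le 1$ for every $n\ge 2$, the right-hand side is at most $|b_1|+\sum_{n=2}^{\infty} n(|a_n|+|b_n|)$, which by hypothesis is bounded by $|b_1|+(1-|b_1|)=1$. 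This already yields the non-strict estimate.

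To upgrade this to the strict inequality required for membership in $\mathcal{F}_H$, I would split into two cases. If at least one coefficient $a_n$ or $b_n$ with $n\ge 2$ is nonzero, then for each $z\in\mathbb{D}$ the damping $r^{n-1}<1$ produces a strict loss, so the displayed bound is strictly below $1$. If instead all such coefficients vanish, then $h(z)=z$ and $g(z)=b_1 z$, the left-hand side equals $|b_1|$, and $f\in\mathcal{F}_H$ (equivalently, $f$ is sense-preserving) exactly when $|b_1|<1$, the boundary case $|b_1|=1$ being the degenerate non-sense-preserving exception. As every step is a triangle-inequality majorization, I do not expect a genuine obstacle; the only points needing care are bookkeeping the undamped $n=1$ term $b_1$ separately and justifying the passage from $\le 1$ to $<1$, for which the strictness of $r^{n-1}<1$ when $n\ge2$ and $r<1$ does the essential work.
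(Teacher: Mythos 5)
The paper does not prove this lemma at all: it is quoted verbatim from the reference of Kalaj, Ponnusamy and Vuorinen, so there is no internal proof to compare against. Your direct termwise majorization is the standard argument for such coefficient conditions and it is sound: differentiating the series, applying the triangle inequality, and using $r^{n-1}\le 1$ gives the weak bound, and the strictness is correctly recovered from the strict damping $r^{n-1}<1$ whenever some coefficient with $n\ge 2$ survives (note that at $z=0$ the sum vanishes outright and the bound is $|b_1|<1$, so strictness holds there too). The one point you flag, the case $h(z)=z$, $g(z)=b_1z$ with $|b_1|=1$, is a genuine defect of the statement as literally written: the function $f(z)=z+\bar z$ satisfies the hypothesis vacuously but fails the defining inequality of $\mathcal{F}_H$. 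This is implicitly excluded in the source, since any sense-preserving $f\in\mathcal{S}_H$ has $|b_1|=|g'(0)|<|h'(0)|=1$, and indeed the paper itself exhibits $z+\frac{n}{n+1}\bar z\in\mathcal{F}_H$ converging to $2\RE z\notin\mathcal{F}_H$. You were right to isolate this rather than paper over it; adding the hypothesis $|b_1|<1$ (or restricting to sense-preserving $f$) makes your argument complete.
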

Lemma \ref{lem1.2} gives a sufficient condition for a function $f \in \mathcal{H}$ to be in $\mathcal{F}_{H}$. Under the hypothesis of Lemma \ref{lem1.2}, Jahangiri (see \cite{jahangiristarlike}) has proved that $f$ is starlike (of order $0$) in $\mathbb{D}$. The subfamily $\mathcal{F}_{H}\subset \mathcal{S}_{H}$ is not affine and linear invariant, that is, if $f=h+\bar{g} \in \mathcal{F}_{H}$ then
\[\frac{\epsilon \overline{f(z)}+f(z)}{1+\epsilon \overline{g'(0)}}\, (|\epsilon|<1)\quad \mbox{and} \quad \frac{f((z+z_0)/(1+\overline{z_0}z))-f(z_0)}{(1-|z_0|^{2})h'(z_0)}\, (|z_0|<1)\]
need not belong to the class $\mathcal{F}_{H}$. Moreover, the class $\mathcal{F}_{H}$ is not preserved under passage to locally uniform limits. To see this, consider the sequence of affine mappings $f_{n}\in \mathcal{F}_{H}$ defined by
\[f_{n}(z)=z+\frac{n}{n+1}\bar{z}.\]
Then $f_{n}(z)\rightarrow 2 \RE z$ in $\mathbb{D}$ locally uniformly in $\mathbb{D}$. This shows, in particular, that $\mathcal{F}_{H}$ is not a compact family. However, the subclass of $\mathcal{F}_{H}$ consisting of functions $f$ for which $f_{\bar{z}}(0)=0$ is a compact normal family. Denote this class by $\mathcal{F}_{H}^{0}$. The property of $\mathcal{F}_{H}^{0}$ being a compact normal family makes it a proper generalization of the family $\mathcal{F}$ introduced by MacGregor \cite{macgregor}. In this paper, we will investigate the properties of functions in the class $\mathcal{F}_{H}^{0}$.

The growth and area theorems for the class $\mathcal{F}_{H}^{0}$ are being discussed in Section \ref{sec2} of this paper. Section \ref{sec3} investigates the boundary behavior of functions in $\mathcal{F}_{H}^{0}$. The radius of convexity and starlikeness of the family $\mathcal{F}_{H}^{0}$ is also determined.

For analytic functions
\[f(z)=z+\sum_{n=2}^{\infty}a_{n}z^{n} \quad (z \in \mathbb{D})\quad\mbox{and}\quad F(z)=z+\sum_{n=2}^{\infty}A_{n}z^{n}\quad (z \in \mathbb{D}),\]
their convolution (or Hadamard product) is defined as
\[(f*F)(z)=z+\sum_{n=2}^{\infty}a_{n}A_{n}z^{n},\quad z\in \mathbb{D}.\]
In the harmonic case, with $f=h+\bar{g}$ and $F=H+\bar{G}$, their harmonic convolution is defined as
\[f*F=h*H+\overline{g*G}.\]
There have been many results about harmonic convolutions (see \cite{cluniesheilsmall,dorff1,dorff2,goodloe}). In section \ref{sec4}, the convolution and convex combination properties of the members of $\mathcal{F}_{H}^{0}$ are examined.

\section{Growth and Area Theorems}\label{sec2}
The first result connects the relation between the classes $\mathcal{F}$ and $\mathcal{F}_{H}^{0}$.
\begin{theorem}\label{relation}
A harmonic function $f=h+\bar{g} \in \mathcal{F}_{H}^{0}$ if and only if the analytic functions $F_{\epsilon}=h+\epsilon g$ belongs to $\mathcal{F}$ for each $|\epsilon|=1$.
\end{theorem}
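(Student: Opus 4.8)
The plan is to work directly from the defining conditions of the two classes and exploit the relationship between the dilatation-type inequality for $\mathcal{F}_H^0$ and the derivative inequality for $\mathcal{F}$. Recall that $f=h+\bar g\in\mathcal{F}_H^0$ means $b_1=g'(0)=0$ together with the inequality $|f_z(z)-1|<1-|f_{\bar z}(z)|$, i.e. $|h'(z)-1|<1-|g'(z)|$ for all $z\in\mathbb{D}$, while $F_\epsilon=h+\epsilon g\in\mathcal{F}$ means $F_\epsilon$ is suitably normalized and $|F_\epsilon'(z)-1|<1$. The key algebraic observation is that $F_\epsilon'(z)-1=(h'(z)-1)+\epsilon g'(z)$, so the whole theorem should reduce to comparing $|h'(z)-1+\epsilon g'(z)|$ with $1$ as $\epsilon$ ranges over the unit circle.

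First I would check the normalization bookkeeping: $F_\epsilon(0)=h(0)+\epsilon g(0)=0$ and $F_\epsilon'(0)=h'(0)+\epsilon g'(0)=1+\epsilon b_1$, so $F_\epsilon$ is normalized in the sense required by $\mathcal{F}$ precisely when $b_1=0$. This is exactly the condition $f_{\bar z}(0)=0$ that distinguishes $\mathcal{F}_H^0$ from $\mathcal{F}_H$, and it must be invoked for both directions. For the forward implication, I would assume $f\in\mathcal{F}_H^0$ and estimate, for $|\epsilon|=1$,
\[
|F_\epsilon'(z)-1|=|(h'(z)-1)+\epsilon g'(z)|\le |h'(z)-1|+|g'(z)|<1,
\]
where the last strict inequality is just the defining inequality of $\mathcal{F}_H^0$ rewritten; the triangle inequality gives the bound uniformly in $\epsilon$, so every $F_\epsilon$ lies in $\mathcal{F}$.

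For the converse, the main subtlety — and the step I expect to be the real obstacle — is passing from the family of scalar inequalities $|(h'(z)-1)+\epsilon g'(z)|<1$ (one for each $|\epsilon|=1$) back to the single harmonic inequality $|h'(z)-1|+|g'(z)|<1$. The idea is that for fixed $z$ the supremum of $|(h'(z)-1)+\epsilon g'(z)|$ over $|\epsilon|=1$ equals $|h'(z)-1|+|g'(z)|$, attained by choosing $\epsilon$ so that $\epsilon g'(z)$ is a positive multiple of $h'(z)-1$ (when $g'(z)\ne 0$; the case $g'(z)=0$ is trivial). Thus the assumption that \emph{all} $F_\epsilon\in\mathcal{F}$ yields $|h'(z)-1|+|g'(z)|\le 1$, and I would need to upgrade this to the strict inequality and confirm the normalization $b_1=0$ (which follows from $F_\epsilon\in\mathcal{F}$ for even a single $\epsilon$, forcing $\epsilon b_1$ to be consistent across the circle only when $b_1=0$). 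The care required is that the optimal $\epsilon$ depends on $z$, so one cannot fix $\epsilon$ first; one argues pointwise in $z$ and then notes that $f\in\mathcal{F}_H^0$ is exactly this pointwise condition. A minor technical point to address along the way is that membership in $\mathcal{F}$ presupposes $F_\epsilon\in\mathcal{S}$ (sense-preserving univalence), but since $|F_\epsilon'-1|<1$ already forces $\RE F_\epsilon'>0$ and hence univalence by the Noshiro–Warschawski theorem, this is automatic and need not be imposed separately.
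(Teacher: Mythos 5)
Your proposal is correct and follows essentially the same route as the paper: the triangle inequality for the forward direction, and a pointwise choice of $\epsilon=\epsilon(z)$ aligning $\epsilon g'(z)$ with $h'(z)-1$ for the converse. The only wobble is your worry about upgrading $\le 1$ to $<1$; since the supremum over the compact circle $|\epsilon|=1$ is actually attained at that aligning $\epsilon$, the strict inequality $|h'(z)-1|+|g'(z)|<1$ follows immediately from the hypothesis applied to that particular $\epsilon$, so no upgrade is needed.
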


\begin{proof}
If $f=h+\bar{g} \in \mathcal{F}_{H}^{0}$ then for each $|\epsilon|=1$, we have
\[|F_{\epsilon}'(z)-1|\leq|h'(z)-1|+|\epsilon||g'(z)|<1\]
showing that $F_{\epsilon} \in \mathcal{F}$. Conversely, if $F_{\epsilon} \in \mathcal{F}$ for each $|\epsilon|=1$, then the inequality $|h'(z)+\epsilon g'(z)-1|<1$ holds for each $z \in \mathbb{D}$ and $|\epsilon|=1$. With appropriate choice of $\epsilon=\epsilon(z)$, it follows that
\[|h'(z)-1|+|g'(z)|<1\]
so that $f \in \mathcal{F}_{H}^{0}$.
\end{proof}

\begin{theorem}\label{th2.1}
Let $f=h+\bar{g} \in \mathcal{F}_{H}^{0}$ where $h$ and $g$ are given by \eqref{eq1.1} with $b_1=g'(0)=0$. Then
\begin{itemize}
  \item [$(i)$] $|a_{n}|\leq 1/n$ for $n=2,3,\ldots$. Equality holds for some $m\geq 2$ iff $f$ is analytic in $\mathbb{D}$ and
  \[f(z)=z+\frac{z^{m}}{m}.\]
  \item [$(ii)$] $|b_{n}|\leq 1/n$ for $n=2,3,\ldots$. Equality holds for some $m\geq 2$ iff $f$ is of the form
  \[f(z)=z+\frac{\bar{z}^{m}}{m}.\]
  \item [$(iii)$] $|a_{n}|^{2}+|b_{n}|^{2}\leq 1/n^{2}$  for $n=2,3,\ldots$.
  \item [$(iv)$] $|a_n|+|b_n|\leq 1/n$ for $n=2,3,\ldots$.
\end{itemize}
\end{theorem}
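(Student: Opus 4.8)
The engine of the proof is Theorem~\ref{relation}, which collapses all four estimates into a single coefficient bound for the analytic class $\mathcal{F}$. First I would record that bound: if $F(z)=z+\sum_{n\geq2}c_nz^n\in\mathcal{F}$, then $\omega:=F'-1$ is analytic, vanishes at the origin, and satisfies $|\omega(z)|<1$ on $\mathbb{D}$. Writing $\omega(z)=\sum_{k\geq1}\omega_kz^k$, the Parseval inequality $\sum_{k\geq1}|\omega_k|^2=\|\omega\|_{H^2}^2\leq\|\omega\|_\infty^2\leq1$ holds, and since the coefficient of $z^{n-1}$ in $\omega$ is $\omega_{n-1}=nc_n$, we get $|nc_n|\leq1$, that is $|c_n|\leq1/n$. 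Now by Theorem~\ref{relation} the function $F_\epsilon=h+\epsilon g$ lies in $\mathcal{F}$ for every $|\epsilon|=1$, and because $b_1=0$ its $n$-th Taylor coefficient is exactly $a_n+\epsilon b_n$. Hence
\[
|a_n+\epsilon b_n|\leq\frac1n\qquad(n\geq2,\ |\epsilon|=1).
\]

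The sharp estimate is $(iv)$, obtained by choosing $\epsilon$ to align the two terms: taking $\epsilon$ with $\epsilon b_n=|b_n|\,a_n/|a_n|$ (and any unit $\epsilon$ when $a_n=0$) makes $a_n+\epsilon b_n$ have modulus exactly $|a_n|+|b_n|$, so $|a_n|+|b_n|\leq1/n$. Parts $(i)$ and $(ii)$ are then immediate, each summand being nonnegative, and $(iii)$ follows from $|a_n|^2+|b_n|^2\leq(|a_n|+|b_n|)^2\leq1/n^2$. Thus the only content beyond $(iv)$ is the equality analysis.

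For the equality statements I would invoke the \emph{rigidity} half of the Parseval bound above: if $\omega$ is analytic on $\mathbb{D}$ with $\|\omega\|_\infty\leq1$ and some coefficient satisfies $|\omega_k|=1$, then $\sum_j|\omega_j|^2\leq1$ forces $\omega=\omega_kz^k$. Suppose $|a_m|=1/m$. From $(iv)$ we first get $b_m=0$, so the $z^{m-1}$-coefficient of $\omega_\epsilon:=F_\epsilon'-1$ is $m(a_m+\epsilon b_m)=ma_m$, of modulus one; rigidity gives $\omega_\epsilon(z)=ma_mz^{m-1}$, i.e. $F_\epsilon(z)=z+a_mz^m$ for \emph{every} $|\epsilon|=1$. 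Since $F_\epsilon=h+\epsilon g$, subtracting two choices of $\epsilon$ forces $g\equiv0$ and $h(z)=z+a_mz^m$ with $|a_m|=1/m$, which after a rotation is $f(z)=z+z^m/m$. The case $|b_m|=1/m$ is symmetric: now $(iv)$ gives $a_m=0$, the $z^{m-1}$-coefficient of $\omega_\epsilon$ is $m\epsilon b_m$, and rigidity yields $F_\epsilon(z)=z+\epsilon b_mz^m$; matching coefficients in $h+\epsilon g=z+\epsilon b_mz^m$ across all $\epsilon$ gives $h(z)=z$, $g(z)=b_mz^m$, so $f(z)=z+\overline{b_mz^m}$, a rotation of $z+\bar z^m/m$.

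The main obstacle is precisely this equality analysis, and in particular ruling out a nonzero co-analytic part when $|a_m|=1/m$. The clean device is to run the rigidity argument on the whole family $\{F_\epsilon\}_{|\epsilon|=1}$ simultaneously: forcing $F_\epsilon(z)=z+a_mz^m$ independently of $\epsilon$ kills $g$ together with all remaining coefficients of $h$ at one stroke, so that no delicate direct estimation of $g'$ from the defining inequality $|h'(z)-1|+|g'(z)|<1$ is needed.
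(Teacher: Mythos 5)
Your argument is correct, but it reaches the conclusion by a genuinely different route for parts $(i)$--$(iii)$ and for the equality analysis. The paper gets $(i)$, $(ii)$, $(iii)$ and both equality statements in one stroke from Lemma~\ref{lem1.1}(c), i.e.\ from the summed inequality $\sum_{n\geq2}n^{2}(|a_n|^{2}+|b_n|^{2})\leq 1$: the individual bounds drop out termwise, and if $|a_m|=1/m$ (or $|b_m|=1/m$) the sum forces every other coefficient of both $h$ and $g$ to vanish simultaneously, which is the shortest possible equality analysis. Only for $(iv)$ does the paper do what you do, namely pass to $F_\epsilon=h+\epsilon g\in\mathcal{F}$ via Theorem~\ref{relation}, quote MacGregor's bound $|a_n+\epsilon b_n|\leq 1/n$, and optimize over $\epsilon$. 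You instead make Theorem~\ref{relation} the sole engine: you reprove MacGregor's coefficient bound by Parseval applied to $\omega=F'-1$, deduce $(iv)$ first, read off $(i)$--$(iii)$ as corollaries (your derivation of $(iii)$ from $(iv)$ via $|a_n|^2+|b_n|^2\leq(|a_n|+|b_n|)^2$ is fine), and handle equality by the rigidity half of Parseval applied to the whole family $\{F_\epsilon\}$, separating $h$ from $g$ by varying $\epsilon$. This buys self-containedness --- you never need the quoted Lemma~\ref{lem1.1}(c) --- and it isolates the one-variable fact that drives everything; the cost is a longer equality argument than the paper's one-line appeal to the summed inequality. One cosmetic point, shared with the paper's own proof: the equality case really pins $f$ down only up to a unimodular factor in the $m$-th coefficient (you say ``after a rotation,'' which is the honest phrasing), whereas the theorem's statement writes the coefficient as exactly $1/m$.
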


\begin{proof}
By Lemma \ref{lem1.1}(c), it is easily seen that
\[\sum_{n=2}^{\infty}n^{2}|a_{n}|^{2} \leq 1 \quad\mbox{and}\quad \sum_{n=2}^{\infty}n^{2}|b_{n}|^{2} \leq 1.\]
This, in particular, shows that $|a_{n}|\leq 1/n$ and $|b_{n}|\leq 1/n$ for $n=2,3,\ldots$. If $|a_{m}|=1/m$ for some $m\geq 2$, then the inequality in Lemma \ref{lem1.1}(c) reduces to
\[m^2 |b_{m}|^{2}+\sum_{\begin{array}{c}
           n=2 \\
           n\neq m
           \end{array}}^{\infty}n^{2}(|a_n|^{2}+|b_n|^{2})=0.\]
This shows that $a_{n}=0$ for all $n \neq m$ and $b_{n}=0$ for all $n$. Thus, $f(z)=z+z^{m}/m$ as desired.

A similar argument shows that if $|b_{m}|=1/m$ for some $m\geq 2$, then $b_{n}=0$ for all $n \neq m$  and $a_{n}=0$ for all $n$. Hence $f(z)=z+\bar{z}^{m}/m$ in this case. This proves $(i)$ and $(ii)$.

The part $(iii)$ of the theorem follows immediately by Lemma \ref{lem1.1}(c). Regarding the proof of $(iv)$, note that $h+\epsilon g \in \mathcal{F}$ for each $|\epsilon|=1$ by Theorem \ref{relation} so that $|a_n+\epsilon b_n|\leq 1/n$ for $n=2,3,\ldots$ (see \cite{macgregor}). In particular, this shows that $|a_n|+|b_n|\leq 1/n$ for $n=2,3,\ldots$.
\end{proof}

\begin{theorem}\label{th2.2}
Every function $f \in \mathcal{F}_{H}^{0}$ satisfies the inequalities
\[|z|-\frac{1}{2}|z|^{2}\leq |f(z)|\leq |z|+\frac{1}{2}|z|^{2},\quad z \in \mathbb{D}.\]
In particular, the range of each function $f \in  \mathcal{F}_{H}^{0}$ contains the disk $|w|<1/2$. Moreover, these results are sharp for the functions $z+z^2/2$ and $z+\bar{z}^{2}/2$.
\end{theorem}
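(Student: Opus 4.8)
The plan is to upgrade the defining inequality to a Schwarz-type estimate on the derivatives, integrate that estimate along radial segments, and then read off both bounds from the triangle inequality.

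First I would pass to the associated analytic functions. By Theorem \ref{relation}, since $f=h+\bar g\in\mathcal{F}_{H}^{0}$, the function $F_{\epsilon}=h+\epsilon g$ lies in $\mathcal{F}$ for every $|\epsilon|=1$. Setting $\psi_{\epsilon}=F_{\epsilon}'-1=h'+\epsilon g'-1$, one has $\psi_{\epsilon}(0)=0$ because $b_{1}=g'(0)=0$, and $|\psi_{\epsilon}|<1$ on $\mathbb{D}$; hence Schwarz's lemma gives $|h'(z)+\epsilon g'(z)-1|\le|z|$ for all $|\epsilon|=1$. Choosing $\epsilon$ so that $\epsilon g'(z)$ is positively aligned with $h'(z)-1$ converts the left-hand side into a genuine sum of moduli, yielding the key estimate
\[|h'(z)-1|+|g'(z)|\le|z|,\qquad z\in\mathbb{D}.\]

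Next I would integrate along the segment $\zeta=tz$, $t\in[0,1]$. Using $h(z)-z=z\int_{0}^{1}(h'(tz)-1)\,dt$ and $g(z)=z\int_{0}^{1}g'(tz)\,dt$ (valid since $h(0)=0$, $h'(0)=1$, $g(0)=0$), and applying the key estimate at the point $tz$, I obtain
\[|h(z)-z|+|g(z)|\le|z|\int_{0}^{1}\bigl(|h'(tz)-1|+|g'(tz)|\bigr)\,dt\le|z|\int_{0}^{1}t|z|\,dt=\tfrac{1}{2}|z|^{2}.\]
The two-sided bound then follows at once from $|f(z)|\le|h(z)|+|g(z)|\le|z|+\bigl(|h(z)-z|+|g(z)|\bigr)$ and $|f(z)|\ge|h(z)|-|g(z)|\ge|z|-\bigl(|h(z)-z|+|g(z)|\bigr)$.

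For the covering statement I would note that $r-\tfrac12 r^{2}$ increases on $[0,1]$ to the limit $1/2$, so any boundary point $w_{0}$ of $f(\mathbb{D})$, realized as a limit $f(z_{n})\to w_{0}$ with $|z_{n}|\to1$, satisfies $|w_{0}|\ge1/2$; since $0\in f(\mathbb{D})$, this forces $\{|w|<1/2\}\subset f(\mathbb{D})$. Sharpness is verified directly: both $z+z^{2}/2$ (take $g\equiv0$) and $z+\bar z^{2}/2$ (take $h(z)=z$) satisfy $|h'-1|+|g'|=|z|<1$ and so lie in $\mathcal{F}_{H}^{0}$, and evaluating at $z=\pm r$ attains equality in the upper and lower bounds respectively. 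The main obstacle is the first step: everything depends on replacing the mere inequality $|h'-1|+|g'|<1$ by the stronger $|h'-1|+|g'|\le|z|$, and the clean route to this is through the analytic functions $F_{\epsilon}$ of Theorem \ref{relation} together with the alignment choice of $\epsilon$; once this is secured, the remaining radial integration and triangle-inequality bookkeeping are routine.
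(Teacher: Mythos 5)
Your proof is correct, and while it follows the paper's broad outline (pass to $F_{\epsilon}=h+\epsilon g\in\mathcal{F}$ via Theorem \ref{relation}, bound derivatives, integrate radially), it diverges in two genuinely useful ways. First, instead of quoting MacGregor's distortion bounds $1-|z|\leq|F_{\epsilon}'(z)|\leq 1+|z|$, you apply Schwarz's lemma directly to $F_{\epsilon}'-1$ and use the alignment choice of $\epsilon$ to obtain the stronger pointwise estimate $|h'(z)-1|+|g'(z)|\leq|z|$, which implies both of the paper's derivative inequalities at once. Second, and more substantively, your lower bound is obtained by integrating $h(z)-z$ and $g(z)$ along the straight radial segment and then using only the triangle inequality, i.e.\ $|f(z)|\geq|h(z)|-|g(z)|\geq|z|-\bigl(|h(z)-z|+|g(z)|\bigr)\geq|z|-\tfrac12|z|^{2}$; the paper instead uses the classical device of integrating $|h'|-|g'|$ along the preimage under $f$ of the radial segment from $0$ to $f(z)$, which relies on $f$ being a sense-preserving homeomorphism onto its image (and, as printed, contains a sign slip where $|h'|+|g'|$ should read $|h'|-|g'|$). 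Your route avoids that machinery entirely and is arguably cleaner; what it costs is nothing here, since the stronger estimate $|h'-1|+|g'|\leq|z|$ is available for this class. Your treatment of the covering statement (boundary points of the open set $f(\mathbb{D})$ have modulus at least $\lim_{r\to1}(r-\tfrac12 r^{2})=\tfrac12$) and of sharpness matches the standard argument the paper leaves implicit.
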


\begin{proof}
Suppose that $f=h+\bar{g} \in \mathcal{F}_{H}^{0}$ and let $F_{\epsilon}=h+\epsilon g$, where $|\epsilon|=1$. Then $F_{\epsilon} \in \mathcal{F}$ by Theorem \ref{relation} so that \cite{macgregor}
\[1-|z|\leq |F_{\epsilon}'(z)|\leq1+|z|,\quad z\in \mathbb{D},\]
or equivalently
\[1-|z|\leq|h'(z)+\epsilon g'(z)|\leq 1+|z|.\]
From this, we may deduce that
\[1-|z|\leq |h'(z)|-|g'(z)|\]
and
\[|h'(z)|+|g'(z)|\leq1+|z|.\]
If $\Gamma$ is the radial segment from $0$ to $z$, then
\begin{align*}
|f(z)|&=\left|\int_{\Gamma} \frac{\partial f}{\partial\zeta}\,d\zeta+\frac{\partial f}{\partial\overline{\zeta}}\,d\overline{\zeta}\right|\leq \int_{\Gamma} (|h'(\zeta)|+|g'(\zeta)|)|d\zeta|\\
      &\leq \int_{0}^{|z|} (1+t)\,dt=|z|+\frac{1}{2}|z|^{2}.
\end{align*}
Next, let $\Gamma$ be the pre-image under $f$ of the radial segment from $0$ to $f(z)$. Then
\begin{align*}
|f(z)|&=\int_{\Gamma}\left| \frac{\partial f}{\partial\zeta}\,d\zeta+\frac{\partial f}{\partial\overline{\zeta}}\,d\overline{\zeta}\right|\geq \int_{\Gamma} (|h'(\zeta)|+|g'(\zeta)|)|d\zeta|\\
      &\geq \int_{0}^{|z|} (1-t)\,dt=|z|-\frac{1}{2}|z|^{2},
\end{align*}
which completes the proof of the theorem.
\end{proof}

\begin{corollary}
The class $\mathcal{F}_{H}^{0}$ is a compact normal family.
\end{corollary}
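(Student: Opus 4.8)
The plan is to establish the two defining properties of a compact family separately: \emph{normality} (every sequence in $\mathcal{F}_{H}^{0}$ admits a locally uniformly convergent subsequence) and \emph{closedness} (the limit of any such sequence again lies in $\mathcal{F}_{H}^{0}$).

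For normality I would combine Theorem \ref{relation} with the growth bound underlying Theorem \ref{th2.2}. Given $f=h+\bar g\in\mathcal{F}_{H}^{0}$, Theorem \ref{relation} gives $F_{\pm 1}=h\pm g\in\mathcal{F}$, and each member $F$ of $\mathcal{F}$ satisfies $|F'(z)|\le 1+|z|$ and hence $|F(z)|\le |z|+\frac12|z|^{2}\le\frac32$ on $\mathbb{D}$. Writing $h=\frac12(F_{1}+F_{-1})$ and $g=\frac12(F_{1}-F_{-1})$ shows that the generators $h$ and $g$ are uniformly bounded on $\mathbb{D}$. By Montel's theorem the families of such $h$ and such $g$ are normal, so from any sequence $f_{k}=h_{k}+\bar g_{k}$ one extracts a subsequence along which $h_{k}\to h$ and $g_{k}\to g$ locally uniformly, whence $f_{k}\to f=h+\bar g$ locally uniformly. (Alternatively, the bounds $|a_{n}|,|b_{n}|\le 1/n$ of Theorem \ref{th2.1} bound $h$ and $g$ directly on compact subsets.)

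For closedness, suppose $f_{k}=h_{k}+\bar g_{k}\in\mathcal{F}_{H}^{0}$ converges locally uniformly to $f=h+\bar g$. The normalizations $f(0)=0$, $(f_{k})_{z}(0)=1$ and $(f_{k})_{\bar z}(0)=0$ pass to the limit through coefficient convergence, so $f$ is harmonic with $a_{1}=1$ and $b_{1}=0$. It remains to verify $|h'(z)-1|+|g'(z)|<1$, which by Theorem \ref{relation} is equivalent to $F_{\epsilon}=h+\epsilon g\in\mathcal{F}$ for every $|\epsilon|=1$; thus the problem reduces to showing that $\mathcal{F}$ itself is closed under locally uniform limits. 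A priori, the strict inequality $|F'(z)-1|<1$ degenerates in the limit only to $|F'(z)-1|\le 1$.

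The main obstacle is precisely this passage from a strict to a possibly non-strict inequality, and the resolution is the maximum modulus principle. The limit $\omega=F'-1$ is analytic with $|\omega|\le 1$ on $\mathbb{D}$ and, since $F'(0)=1$, satisfies $\omega(0)=0$. Were $|\omega(z_{0})|=1$ at some interior point, $\omega$ would be a unimodular constant, contradicting $\omega(0)=0$; hence $|\omega(z)|<1$ throughout $\mathbb{D}$ and $F\in\mathcal{F}$. Applying this to each $F_{\epsilon}$—whose normalization $g'(0)=b_{1}=0$ is exactly what forces $F_{\epsilon}'(0)=1$—yields $f\in\mathcal{F}_{H}^{0}$. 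This last normalization is indispensable: it is the failure of $b_{1}=0$ that permits the non-compactness example $f_{n}(z)=z+\tfrac{n}{n+1}\bar z$ noted earlier, so the condition $f_{\bar z}(0)=0$ defining $\mathcal{F}_{H}^{0}$ is precisely what restores compactness.
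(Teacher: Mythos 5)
Your proof is correct, and while its overall skeleton (normality plus closedness under locally uniform limits) matches the paper's, the two halves are each handled by a genuinely different mechanism. For normality, the paper invokes the growth bound of Theorem \ref{th2.2} to get local boundedness of the harmonic functions $f$ themselves and then cites the harmonic analogue of Montel's theorem from Clunie--Sheil-Small; you instead bound the analytic generators $h=\tfrac12(F_{1}+F_{-1})$ and $g=\tfrac12(F_{1}-F_{-1})$ via Theorem \ref{relation} and apply the classical Montel theorem, which is a slightly more elementary route that avoids any appeal to a normality criterion for harmonic families. More significantly, for closedness the paper simply asserts that $|h_n'(z)-1|<1-|g_n'(z)|$ ``implies'' $|h'(z)-1|<1-|g'(z)|$ in the limit, which as stated only yields the non-strict inequality $|h'(z)-1|\le 1-|g'(z)|$; you correctly identify this degeneration as the crux and close the gap with the maximum modulus principle applied to $\omega=F_{\epsilon}'-1$, using $\omega(0)=0$ (which is exactly where the normalization $b_{1}=0$ enters) to rule out $|\omega|$ attaining the value $1$ at an interior point. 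So your argument buys a rigorous treatment of the one step the paper elides, at the cost of a little extra machinery; your closing remark tying the hypothesis $f_{\bar z}(0)=0$ to the non-compactness example $f_n(z)=z+\tfrac{n}{n+1}\bar z$ is also exactly the right way to see why that normalization is essential.
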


\begin{proof}
By Theorem \ref{th2.2}, the functions $f \in \mathcal{F}_{H}^{0}$ are uniformly bounded on each compact subset of $\mathbb{D}$. Thus the family $\mathcal{F}_{H}^{0}$ is locally bounded and hence normal, since a family of harmonic functions is normal if it is locally bounded (see \cite[Theorem 2.3]{cluniesheilsmall}).

To prove the compactness of $\mathcal{F}_{H}^{0}$, suppose that $f_n=h_n+\overline{g_n} \in \mathcal{F}_{H}^{0}$ and that $f_n\rightarrow f$ uniformly on compact subsets of $\mathbb{D}$. Then $f$ is harmonic and so $f=h+\bar{g}$. It is easy to see that $h_n\rightarrow h$ and $g_n\rightarrow g$ locally uniformly so that the condition
\[|h'_n(z)-1|<1-|g'_n(z)|,\quad z \in \mathbb{D}, \quad n=1,2,\ldots\]
implies that $|h'(z)-1|<1-|g'(z)|$ for $z \in \mathbb{D}$ as $n\rightarrow\infty$. Thus the limit function $f$ again belongs to $\mathcal{F}_{H}^{0}$. This concludes the proof.
\end{proof}

\begin{theorem}\label{th2.4}
If $f \in \mathcal{F}_{H}^{0}$ then the Jacobian of $f$ satisfy
\[J_{f}(z)\leq(1+|z|)^{2},\quad z \in \mathbb{D}.\]
Equality occurs only if $f$ is analytic and is a rotation of the function $f(z)=z+z^2/2$.
\end{theorem}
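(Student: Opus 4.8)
The plan is to read off the bound directly from the estimates already established in the proof of Theorem~\ref{th2.2}, after factoring the Jacobian. Writing
\[J_{f}(z)=|h'(z)|^{2}-|g'(z)|^{2}=\bigl(|h'(z)|-|g'(z)|\bigr)\bigl(|h'(z)|+|g'(z)|\bigr),\]
I would invoke the two inequalities extracted there, namely $|h'(z)|-|g'(z)|\geq 1-|z|>0$ and $|h'(z)|+|g'(z)|\leq 1+|z|$. Since $|g'(z)|\geq 0$, the first factor obeys $|h'(z)|-|g'(z)|\leq |h'(z)|+|g'(z)|\leq 1+|z|$ as well, so both factors are nonnegative and bounded above by $1+|z|$. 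Multiplying yields $J_{f}(z)\leq(1+|z|)^{2}$ immediately, which settles the inequality part with essentially no computation.

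For the equality statement I would argue on a point $z_{0}\neq 0$ (equality at the origin being automatic, since $J_{f}(0)=1$). If $J_{f}(z_{0})=(1+|z_{0}|)^{2}$, then a product of two quantities, each at most $1+|z_{0}|$, attains $(1+|z_{0}|)^{2}$; this forces both factors to equal $1+|z_{0}|$. Equating them gives $|g'(z_{0})|=0$, and consequently $|h'(z_{0})|=1+|z_{0}|$. The task then reduces to converting this pointwise saturation into a global identification of $f$.

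To do so I would pass through the analytic companions. By Theorem~\ref{relation}, $F_{\epsilon}=h+\epsilon g\in\mathcal{F}$ for every $|\epsilon|=1$, so the function $\omega_{\epsilon}=h'+\epsilon g'-1$ is analytic, maps $\mathbb{D}$ into $\mathbb{D}$, and vanishes at the origin because $g'(0)=0$; the Schwarz lemma gives $|\omega_{\epsilon}(z)|\leq|z|$. At $z_{0}$ we have $\omega_{\epsilon}(z_{0})=h'(z_{0})-1$, and $|h'(z_{0})|=1+|z_{0}|$ forces $|\omega_{\epsilon}(z_{0})|=|z_{0}|$, the extremal case. Hence $\omega_{\epsilon}(z)=\lambda_{\epsilon}z$ with $|\lambda_{\epsilon}|=1$, that is, $h'(z)+\epsilon g'(z)=1+\lambda_{\epsilon}z$ for all $z\in\mathbb{D}$ and each unimodular $\epsilon$. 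Taking $\epsilon=\pm1$ and solving the resulting linear system yields $h'(z)=1+cz$ and $g'(z)=dz$ for constants $c,d$. The condition $g'(z_{0})=0$ with $z_{0}\neq 0$ gives $d=0$, so $g\equiv 0$ and $f$ is analytic, while $|h'(z_{0})|=1+|z_{0}|$ pins $|c|=1$. Thus $f(z)=z+(c/2)z^{2}$ with $|c|=1$, which is exactly a rotation of $z+z^{2}/2$.

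The main obstacle is the equality analysis rather than the inequality: the delicate step is upgrading the single pointwise identity $|h'(z_{0})|=1+|z_{0}|$ into a global conclusion about $h$ and $g$. This is precisely what the equality case of the Schwarz lemma, applied uniformly across the unimodular parameter $\epsilon$, delivers, and I expect the only care needed there is in checking that $\omega_{\epsilon}$ genuinely saturates Schwarz at $z_{0}$ and in combining two choices of $\epsilon$ to isolate both $h'$ and $g'$.
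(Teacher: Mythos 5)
Your proof is correct, and the inequality part coincides with the paper's (the paper writes $J_f\leq|h'|^2\leq(1+|z|)^2$ using Schwarz on $h'-1$; your factorization $(|h'|-|g'|)(|h'|+|g'|)$ with the two estimates from Theorem~\ref{th2.2} is an equivalent route). The equality analysis diverges in one step: after extracting $|g'(z_0)|=0$ and $|h'(z_0)|=1+|z_0|$, the paper applies the Schwarz equality case to $h'-1$ alone to get $h(z)=z+e^{i\theta}z^2/2$, and then kills $g$ by citing the sharp coefficient bound of Theorem~\ref{th2.1}(i) (namely $|a_2|=1/2$ forces $f(z)=z+z^2/2$ up to rotation, in particular $g\equiv 0$). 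You instead run the Schwarz equality case through the whole family $\omega_\epsilon=h'+\epsilon g'-1$ and solve the linear system from $\epsilon=\pm1$ to obtain $h'(z)=1+cz$, $g'(z)=dz$, and then $d=0$ from $g'(z_0)=0$ with $z_0\neq0$. Your version is self-contained (no appeal to the coefficient theorem) at the cost of a small extra computation; both are sound. One genuine improvement on your side: you explicitly set aside $z_0=0$, where $J_f(0)=1=(1+0)^2$ holds for every $f\in\mathcal{F}_H^0$ and the equality case of the Schwarz lemma is not applicable; the paper's argument tacitly assumes $z_0\neq0$, and indeed the theorem's ``only if'' clause should be read for nonzero $z$.
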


\begin{proof}
Let $f=h+\bar{g}$. Then $h \in \mathcal{F}$ and
\begin{equation}\label{eq2.1}
J_{f}(z)=|h'(z)|^{2}-|g'(z)|^{2}\leq |h'(z)|^2\leq(1+|z|)^{2},\quad z \in \mathbb{D}.
\end{equation}
For the proof of equality, suppose that there is a $z_{0}\in \mathbb{D}$ for which $J_{f}(z_{0})=(1+|z_{0}|)^{2}$. Then equality occurs throughout in \eqref{eq2.1} so that $|g'(z_{0})|=0$ and $|h'(z_{0})|=1+|z_{0}|$. By applying Schwarz Lemma to the function $h'(z)-1$, we have
\[|h'(z_{0})|\leq 1+|h'(z_{0})-1|\leq 1+|z_{0}|.\]
Thus, we deduce that $|h'(z_{0})-1|=|z_{0}|$. Again, Schwarz Lemma shows that $h'(z)-1=e^{i\theta}z$ for some $\theta \in \mathbb{R}$ and hence $h(z)=z+e^{i\theta}z^{2}/2$. As $|a_{2}|=1/2$, $g\equiv 0$ in $\mathbb{D}$ by Theorem \ref{th2.1}$(i)$ so that $f(z)=z+e^{i\theta}z^{2}/2$.
\end{proof}

It is known that (see \cite[Chapter 6]{duren}) if $f\in \mathcal{S}_{H}^{0}$ then the area of $f(\mathbb{D})$ is greater than or equal to $\pi/2$. Furthermore, the minimum area is attained for the function $f(z)=z+\bar{z}^{2}/2$ and its rotations. This, in particular, shows that the class $\mathcal{F}_{H}^{0}$ contains area-minimizing functions. The next theorem demonstrates that $\mathcal{F}_{H}^{0}$ does contain area-maximizing functions also.

\begin{theorem}\label{th2.5}
The area of the image of each function $f$ in $\mathcal{F}_{H}^{0}$ is less than or equal to $3\pi/2$ and this is a maximum attained only by the analytic functions $f(z)=z+z^2/2$ and its rotations.
\end{theorem}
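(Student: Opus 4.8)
The plan is to express the area as the integral of the Jacobian over $\mathbb{D}$, convert it into a coefficient series by term-by-term integration, and then bound that series using the estimate of Lemma \ref{lem1.1}(c).

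First, since $f=h+\bar g\in \mathcal{F}_{H}^{0}\subset\mathcal{S}_{H}^{0}$ is univalent and sense-preserving, the area of $f(\mathbb{D})$ is
\[
A(f)=\iint_{\mathbb{D}}J_{f}(z)\,dA=\iint_{\mathbb{D}}\bigl(|h'(z)|^{2}-|g'(z)|^{2}\bigr)\,dA .
\]
Writing $h'$ and $g'$ as power series and integrating term by term — which is justified because the monomials are orthogonal on $\mathbb{D}$ and the coefficients decay fast enough by Lemma \ref{lem1.1}(c) — and using $\iint_{\mathbb{D}}|z|^{2k}\,dA=\pi/(k+1)$, I expect to arrive at
\[
A(f)=\pi\left(1+\sum_{n=2}^{\infty}n\bigl(|a_{n}|^{2}-|b_{n}|^{2}\bigr)\right),
\]
where I have used $b_{1}=0$ for functions in $\mathcal{F}_{H}^{0}$. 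As a sanity check, $f(z)=z+\bar z^{2}/2$ gives area $\pi/2$, matching the known area-minimizer, and $f(z)=z+z^{2}/2$ gives $3\pi/2$.

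Next I would bound the series from above. Discarding the non-positive contribution $-n|b_{n}|^{2}$ and exploiting the inequality $n\le n^{2}/2$, valid for every $n\ge 2$, gives
\[
\sum_{n=2}^{\infty}n\bigl(|a_{n}|^{2}-|b_{n}|^{2}\bigr)\le\sum_{n=2}^{\infty}n|a_{n}|^{2}\le\frac12\sum_{n=2}^{\infty}n^{2}|a_{n}|^{2}\le\frac12,
\]
the final inequality being Lemma \ref{lem1.1}(c) with $b_{1}=0$. This yields $A(f)\le 3\pi/2$.

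For the equality discussion I would trace the inequalities backwards. Equality forces $b_{n}=0$ for all $n$ (so $g\equiv 0$ and $f$ is analytic), $a_{n}=0$ for all $n\ge 3$ (because $n<n^{2}/2$ strictly when $n\ge 3$), and $\sum_{n}n^{2}|a_{n}|^{2}=1$, which then compels $4|a_{2}|^{2}=1$. Hence $f(z)=z+e^{i\theta}z^{2}/2$ for some $\theta\in\mathbb{R}$, that is, a rotation of $z+z^{2}/2$, and one checks directly that this function attains the bound. I expect the only delicate points to be the identification of the area with $\iint_{\mathbb{D}}J_{f}\,dA$ and the justification of the term-by-term integration; the rest is a clean estimate whose whole force comes from the factor-of-two slack in $n\le n^{2}/2$ for $n\ge 2$ combined with Lemma \ref{lem1.1}(c).
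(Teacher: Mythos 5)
Your proposal is correct and follows essentially the same route as the paper: the area identity $A=\pi\bigl(1+\sum_{n\ge 2}n(|a_n|^2-|b_n|^2)\bigr)$, the estimate via $n\le n^2/2$ combined with Lemma \ref{lem1.1}(c), and the same equality analysis forcing $g\equiv 0$, $a_n=0$ for $n\ge 3$, and $|a_2|=1/2$. The only cosmetic difference is that you drop the $-n|b_n|^2$ terms outright where the paper first replaces them by $+n|b_n|^2$ before invoking the lemma; both chains are valid and lead to the identical conclusion.
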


\begin{proof}
Suppose that $f=h+\bar{g} \in \mathcal{F}_{H}^{0}$, where $h$ and $g$ are given by \eqref{eq1.1}. Then the area of the image $f(\mathbb{D})$ is
\begin{equation}\label{eq2.2}
\begin{split}
A&=\int \!\!\! \int_{\mathbb{D}}J_{f}(z)\,dx\,dy=\int \!\!\! \int_{\mathbb{D}}(|h'(z)|^{2}-|g'(z)|^{2})\,dx\,dy\\
 &=\pi\left[1+\sum_{n=2}^{\infty}n(|a_{n}|^{2}-|b_{n}|^{2})\right]\leq\pi\left[1+\sum_{n=2}^{\infty}n(|a_{n}|^{2}+|b_{n}|^{2})\right]\\
 &\leq \pi\left[1+\frac{1}{2}\sum_{n=2}^{\infty}n^{2}(|a_{n}|^{2}+|b_{n}|^{2})\right]\leq \frac{3\pi}{2},
\end{split}
\end{equation}
using Lemma \ref{lem1.1}(c). If $A=3\pi/2$ for a function $f\in \mathcal{F}_{H}^{0}$ then equality occurs throughout in \eqref{eq2.2}. This shows that $b_{n}=0$ for all $n\geq 2$, $a_{n}=0$ for all $n \geq 3$ and $|a_{2}|=1/2$. Hence $f$ is a rotation of the function $z+z^2/2$.
\end{proof}

\section{Boundary Behavior, convexity and starlikeness}\label{sec3}
In this section, it has been shown that the boundary of $f(\mathbb{D})$ is a rectifiable Jordan curve for each $f \in \mathcal{F}_{H}^{0}$. We will also determine the radius of convexity and starlikeness of the class $\mathcal{F}_{H}^{0}$.

\begin{theorem}\label{th3.1}
Each function in $\mathcal{F}_{H}^{0}$ maps $\mathbb{D}$ onto a domain bounded by a rectifiable Jordan curve.
\end{theorem}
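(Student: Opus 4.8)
The plan is to verify three things in turn: that $f$ extends continuously to $\overline{\mathbb{D}}$, that the boundary image $f(\partial\mathbb{D})$ has finite length, and that this boundary image is a simple closed curve. Throughout I would write $f=h+\bar g$ and use that $f$ is univalent and sense-preserving (Lemma \ref{lem1.1}(a)) together with the derivative estimates already recorded in Section \ref{sec2}.

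First I would extract the continuous extension and rectifiability from the size of the derivatives. The defining inequality $|h'(z)-1|+|g'(z)|<1$ forces $|h'(z)|<2$ and $|g'(z)|<1$ throughout $\mathbb{D}$, so $h'$ and $g'$ are bounded analytic functions and in particular belong to the Hardy space $H^{1}$. By the classical theorem that an analytic function whose derivative lies in $H^{1}$ extends continuously to $\overline{\mathbb{D}}$ and is absolutely continuous on $\partial\mathbb{D}$, both $h$ and $g$ admit such extensions; hence $f=h+\bar g$ extends continuously to $\overline{\mathbb{D}}$ and $\theta\mapsto f(e^{i\theta})$ is absolutely continuous, hence of bounded variation. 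To obtain an explicit length bound I would compute
\[\frac{\partial}{\partial\theta}f(re^{i\theta})=ire^{i\theta}h'(re^{i\theta})-ire^{-i\theta}\overline{g'(re^{i\theta})},\]
so that $\left|\tfrac{\partial}{\partial\theta}f(re^{i\theta})\right|\le r(|h'|+|g'|)\le r(1+r)$ by the inequality $|h'(z)|+|g'(z)|\le 1+|z|$ established in the proof of Theorem \ref{th2.2}. Thus the length of the image of $|z|=r$ is at most $2\pi r(1+r)\le 4\pi$, uniformly in $r$, and letting $r\to 1$ shows that $f(\partial\mathbb{D})$ is rectifiable.

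It remains to show that $f(\partial\mathbb{D})$ is a Jordan curve, which I expect to be the main obstacle. Write $\Omega=f(\mathbb{D})$. Since $J_{f}=|h'|^{2}-|g'|^{2}>0$ on $\mathbb{D}$, $f$ is an open, locally injective map there, so $\Omega$ is open. I would first check that $f(\mathbb{D})\cap f(\partial\mathbb{D})=\emptyset$: if some boundary point were carried to a value $w_{0}=f(z_{0})$ with $z_{0}\in\mathbb{D}$, then, $f$ being open near $z_{0}$, interior points close to the offending boundary point would land in $f$ of a neighbourhood of $z_{0}$, producing two distinct points of $\mathbb{D}$ with the same image and contradicting univalence. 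Consequently $f(\partial\mathbb{D})=\partial\Omega$, and by the argument principle for sense-preserving harmonic maps the winding number of $\theta\mapsto f(e^{i\theta})$ about every point of $\Omega$ equals the number of preimages counted positively, namely $1$.

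Finally I would rule out self-intersections of the boundary curve. Here I would exploit that, by Lemma \ref{lem1.1}(a), $\Omega$ is a close-to-convex (linearly accessible) domain whose complement is a union of disjoint rays; since $\Omega$ is bounded by Theorem \ref{th2.2}, this structure is incompatible with an inward slit or a pinch point on $\partial\Omega$. Combined with the fact that every point of $\Omega$ has winding number $1$, this forces the boundary correspondence $\theta\mapsto f(e^{i\theta})$ to be injective. A continuous injection of the compact set $\partial\mathbb{D}$ is a homeomorphism onto its image, so $f(\partial\mathbb{D})$ is a Jordan curve. The delicate point, and the step I would write out most carefully, is precisely this last conversion of close-to-convexity and the winding-number information into genuine injectivity of the boundary map.
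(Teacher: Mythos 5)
Your first two steps---the continuous extension of $f$ to $\overline{\mathbb{D}}$ and the rectifiability of the boundary image---are correct and essentially equivalent to what the paper does, though the paper gets both more elementarily from the Lipschitz estimate $|f(z_1)-f(z_2)|\leq\int(|h'|+|g'|)|dz|\leq 2|z_1-z_2|$ rather than invoking $H^{1}$ theory. The genuine gap is in the final step, the injectivity of $\theta\mapsto f(e^{i\theta})$. Your key claim---that a bounded close-to-convex domain cannot have ``an inward slit or a pinch point''---is false: a disk with a radial segment removed is bounded and linearly accessible (its complement is still a union of non-crossing rays), and for a map onto such a domain the continuous boundary extension traverses the slit twice while the winding number about every interior point is still $1$. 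So neither close-to-convexity nor the winding-number count, separately or together, yields injectivity of the boundary correspondence; the ``delicate point'' you flag is not merely delicate but unproved, and the tools you propose cannot close it.

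The paper closes it with a different device, which is the real content of the theorem. By Theorem \ref{relation}, every $F_{\lambda}=h+\lambda g$ with $|\lambda|=1$ lies in MacGregor's class $\mathcal{F}$, and MacGregor's Theorem 3 gives univalence of each $F_{\lambda}$ on the \emph{closed} disk. Now if $f(z_1)=f(z_2)$, then $h(z_1)-h(z_2)=\overline{g(z_2)-g(z_1)}$; either both sides vanish (and $F_{1}$ settles it), or one sets $\theta=\arg\{h(z_1)-h(z_2)\}$, observes that $e^{-i\theta}(h(z_1)-h(z_2))=\overline{e^{i\theta}(g(z_2)-g(z_1))}$ is a positive real, and conjugates to get $h(z_1)-h(z_2)=e^{2i\theta}(g(z_2)-g(z_1))$, i.e.\ $F_{\mu}(z_1)=F_{\mu}(z_2)$ with $\mu=e^{2i\theta}$ on the unit circle, forcing $z_1=z_2$. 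This transfer of boundary univalence from the analytic ``shears'' $h+\lambda g$ to the harmonic map $f$ is the missing idea in your proposal.
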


\begin{proof}
Each $f=h+\bar{g} \in \mathcal{F}_{H}^{0}$ is uniformly continuous in $\mathbb{D}$ and hence can be extended continuously onto $|z|=1$. To see this, let $z_1$ and $z_2$ be distinct points in $\mathbb{D}$. If ${[z_1.z_2]}$ is the line segment from $z_1$ to $z_2$, then
\begin{align*}
|f(z_1)-f(z_2)|&=\left|\int_{[z_1.z_2]} f_z(z)dz+f_{\bar{z}}(z)d\bar{z}\right|=\left|\int_{[z_1.z_2]} h'(z) dz+g'(z)d\bar{z}\right|\\
               &\leq\int_{[z_1.z_2]}(|h'(z)|+|g'(z)|)|dz|\leq 2|z_1-z_2|,
\end{align*}
using the proof of Theorem \ref{th2.2}. Let $C$ be defined by $w=f(e^{i \theta})$, $0\leq \theta \leq 2\pi$.

If $0=\theta_0<\theta_1<\theta_2\cdots <\theta_n=2\pi$ is a partition of $[0,2\pi]$, then
\[\sum_{k=1}^{n}|f(e^{i\theta_{k}})-f(e^{i\theta_{k-1}})|\leq 2\sum_{k=1}^{n}|e^{i\theta_{k}}-e^{i\theta_{k-1}}|<4\pi\]
showing that $C$ is rectifiable.

Next, it remains to show that $f$ is univalent in $\partial \mathbb{D}=\{z\in \mathbb{C}:|z|=1\}$. By Theorem \ref{relation}, the analytic functions $F_\lambda=h+\lambda g$ belongs to the class $\mathcal{F}$ for all $|\lambda|=1$. In particular, each $F_\lambda$ is univalent in $\partial\mathbb{D}$ by \cite[Theorem 3]{macgregor}. Suppose that $z_1$, $z_2 \in \mathbb{D}$ be such that $f(z_1)=f(z_2)$. Then
\[h(z_1)-h(z_2)=\overline{g(z_2)-g(z_1)}.\]
If $h(z_1)=h(z_2)$ then $g(z_1)=g(z_2)$ so that $z_1=z_2$ by the univalence of $F_1$. Therefore, assume that $h(z_1)\neq h(z_2)$. Denote by
\[\theta=\arg \{h(z_1)-h(z_2)\}\in [0,2\pi).\]
Then, it follows that
\[e^{-i \theta}(h(z_1)-h(z_2))=\overline{e^{i \theta}(g(z_2)-g(z_1))}\]
is a positive real number. Thus on taking conjugates, we have
\[h(z_1)-h(z_2)=e^{2i \theta}(g(z_2)-g(z_1)),\]
which imply that $F_{\mu}(z_1)=F_{\mu}(z_2)$, $\mu=e^{2i \theta}$. Hence $z_1=z_2$ and this proves that $f$ is univalent in $\partial\mathbb{D}$.
\end{proof}

The next theorem determines the radius of convexity of the class $\mathcal{F}_{H}^{0}$.
\begin{theorem}\label{th3.2}
The radius of convexity of the class $\mathcal{F}_{H}^{0}$ is $1/2$. Moreover, the bound $1/2$ is sharp.
\end{theorem}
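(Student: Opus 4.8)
The plan is to prove the two halves separately: that every member of $\mathcal{F}_{H}^{0}$ is convex throughout $|z|<1/2$ (so the radius is at least $1/2$), and that the analytic member $f_{0}(z)=z+z^{2}/2$ fails to be convex in any larger disk (so $1/2$ is attained and sharp). For the sharpness half I would start from $f_{0}$, which lies in $\mathcal{F}_{H}^{0}$ because $|f_{0}'(z)-1|=|z|<1=1-|g'(z)|$ with $g\equiv 0$. Being analytic, its convexity on $|z|<r$ is governed by $\RE\{1+zf_{0}''/f_{0}'\}$; with $f_{0}'=1+z$, $f_{0}''=1$ this is $\RE\{(1+2z)/(1+z)\}$, whose value at $z=-r$ equals $(1-2r)/(1-r)<0$ for $r\in(1/2,1)$. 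Hence $f_{0}$ is convex in no disk of radius exceeding $1/2$.

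For the lower bound I would invoke the standard criterion that $f=h+\bar g$ maps $|z|<\rho$ onto a convex domain exactly when each image curve $\theta\mapsto f(re^{i\theta})$, $r<\rho$, turns monotonically, i.e. $\frac{\partial}{\partial\theta}\arg\big(\frac{\partial}{\partial\theta}f(re^{i\theta})\big)\ge 0$; univalence of the boundary curve is already supplied by Theorem \ref{th3.1}. Since $\frac{\partial}{\partial\theta}f(re^{i\theta})=i\big(zh'(z)-\overline{zg'(z)}\big)$, differentiating once more converts the criterion into
\[\RE\,\frac{zh'(z)+z^{2}h''(z)+\overline{zg'(z)+z^{2}g''(z)}}{zh'(z)-\overline{zg'(z)}}\ge 0,\qquad |z|=r .\]
Writing $A=zh'$, $B=zg'$, $C=zh'+z^{2}h''$, $D=zg'+z^{2}g''$ and expanding $\RE\{(C+\bar D)(\bar A-B)\}$, the two $z^{2}h'g'$ terms cancel, and estimating the remaining second–derivative terms by their moduli yields
\[\RE\{(C+\bar D)(\bar A-B)\}\ \ge\ |z|^{2}\big(|h'|+|g'|\big)\Big[\big(|h'|-|g'|\big)-|z|\big(|h''|+|g''|\big)\Big].\]
Thus it suffices to establish the clean inequality $|h'(z)|-|g'(z)|\ge |z|\,(|h''(z)|+|g''(z)|)$ for $|z|\le 1/2$. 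This reduction is tight: it holds with equality at $z=-1/2$ both for $f_{0}$ and for the genuinely harmonic example $z+z^{2}/4+\overline{z^{2}/4}$, which a direct computation shows is convex exactly up to $|z|=1/2$.

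To feed information into this inequality I would use Theorem \ref{relation}: for every $|\epsilon|=1$ the function $h'-1+\epsilon g'$ is a Schwarz function, so $|h'(z)-1+\epsilon g'(z)|\le|z|$, and aligning the summands gives the basic bound $|h'(z)-1|+|g'(z)|\le|z|$. Equivalently $h'-1=z\,p$ and $g'=z\,q$ with $p,q$ analytic and $|p(z)|+|q(z)|\le 1$ throughout $\mathbb{D}$. Then $|h'|-|g'|\ge 1-|z|(|p|+|q|)$, while $h''=p+zp'$, $g''=q+zq'$ give $|h''|+|g''|\le(|p|+|q|)+|z|(|p'|+|q'|)$, so the target inequality becomes $2|z|(|p|+|q|)+|z|^{2}(|p'|+|q'|)\le 1$.

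The main obstacle is the sharp control of $|p'|+|q'|$. Any plain Schwarz–Pick estimate — e.g. $|p'|+|q'|\le 1/(1-|z|^{2})$, equivalently $|h''|+|g''|\le(|h'|+|g'|)/(1-|z|)$ — is too wasteful near $|z|=1/2$ and only yields a radius of order $1-1/\sqrt{2}\approx0.293$, the same value one also extracts from the coefficient inequalities of Lemma \ref{lem1.1}. The loss is real: the worst case of those crude bounds would force $|p|+|q|$ near its maximum while $p',q'$ are simultaneously large, which is impossible, since where $|p|+|q|$ approaches $1$ the combination $p+\epsilon q$ (for the aligning $\epsilon$) approaches modulus one at an interior point and is therefore nearly constant, pinning $p',q'$ down to be small. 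The crux is therefore to replace the crude second–derivative bound by one that keeps the factor $1-|p+\epsilon q|^{2}$ in Schwarz–Pick and retains the cancellation exhibited above; once this rigidity is built in, the required inequality collapses to $1-2|z|\ge 0$ and closes precisely at $|z|=1/2$.
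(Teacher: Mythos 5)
Your sharpness argument is fine, and your algebraic reduction of the convexity criterion to the inequality $|h'(z)|-|g'(z)|\ge |z|\,(|h''(z)|+|g''(z)|)$ on $|z|\le 1/2$ is correct as far as it goes. But the lower bound is not proved: the entire content of the theorem has been pushed into that inequality, and you then concede that every estimate you actually have at your disposal (Schwarz--Pick applied to $p$ and $q$ separately, or the coefficient bounds of Lemma \ref{lem1.1}) only reaches a radius of about $0.293$. The final paragraph --- ``replace the crude second-derivative bound by one that keeps the factor $1-|p+\epsilon q|^{2}$ \dots once this rigidity is built in, the required inequality collapses to $1-2|z|\ge 0$'' --- is a description of the estimate you would need, not a derivation of it. The difficulty is real and you have diagnosed it correctly: the unimodular $\epsilon$ that aligns $|p(z)|+|q(z)|=|p(z)+\epsilon q(z)|$ is not the one that aligns $|p'(z)|+|q'(z)|=|p'(z)+\epsilon q'(z)|$, so Schwarz--Pick applied to a single $p+\epsilon q$ does not control both quantities simultaneously, and no argument is supplied to bridge this. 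As written, the proof establishes sharpness but not that every $f\in\mathcal{F}_{H}^{0}$ is convex in $|z|<1/2$.

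The paper avoids this analysis entirely: by Theorem \ref{relation} every $h+\epsilon g$ ($|\epsilon|=1$) lies in $\mathcal{F}$, hence is convex in $|z|<1/2$ by MacGregor's Theorem 5, and the convexity of the harmonic $f=h+\bar g$ on $|z|<1/2$ then follows from the transfer result \cite[Theorem 5.7]{cluniesheilsmall} (the ``stable convexity'' mechanism). That transfer theorem is precisely the ingredient your direct computation is trying to reprove from scratch; either invoke it, or supply a genuine proof of the displayed second-derivative inequality --- at present neither is done.
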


\begin{proof}
Let $f=h+\bar{g} \in \mathcal{F}_{H}^{0}$. Then the analytic functions $h-e^{i\phi} g$, $0\leq\phi<2\pi$ belongs to the class $\mathcal{F}$. Consequently the functions $h-e^{i\phi} g$ are convex in $|z|<1/2$ by \cite[Theorem 5]{macgregor}. In view of \cite[Theorem 5.7]{cluniesheilsmall}, it follows that $f$ is convex in $|z|<1/2$. The functions $z+z^{2}/2$ and $z+\bar{z}^{2}/2$ shows that the bound $1/2$ is best possible.
\end{proof}

Theorem \ref{th3.2} shows that the classes $\mathcal{F}$ and $\mathcal{F}_{H}^{0}$ have the same radius of convexity. A similar statement holds regarding the radius of starlikeness as seen by the following theorem.

\begin{theorem}\label{th3.3}
The classes $\mathcal{F}$ and $\mathcal{F}_{H}^{0}$ have the same radius of starlikeness.
\end{theorem}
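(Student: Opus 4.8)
Write $r_0$ for the radius of starlikeness of $\mathcal{F}$; the claim is that $\mathcal{F}_H^0$ has this same radius, and I would establish the two inequalities separately. One direction is immediate: every $F\in\mathcal{F}$ is an analytic member of $\mathcal{F}_H^0$ (its co-analytic part vanishes), so any disc in which the whole of $\mathcal{F}_H^0$ is starlike is a disc in which the whole of $\mathcal{F}$ is starlike; hence the radius of starlikeness of $\mathcal{F}_H^0$ is at most $r_0$. Moreover MacGregor's extremal function, which fails to be starlike just outside $|z|=r_0$, already lies in $\mathcal{F}\subset\mathcal{F}_H^0$, so this bound is sharp for $\mathcal{F}_H^0$. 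Everything therefore reduces to the reverse inequality: \emph{every} $f=h+\bar g\in\mathcal{F}_H^0$ is starlike in $|z|<r_0$.

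For this I would use the criterion that $f$ is starlike in $|z|<r$ exactly when
\[ \RE\frac{z h'(z)-\overline{z g'(z)}}{h(z)+\overline{g(z)}}\ge 0\qquad(|z|=r), \]
the harmonic analogue of $\RE(zF'/F)\ge 0$. The essential input is a sharpened bound from Theorem \ref{relation}: since $F_\epsilon=h+\epsilon g\in\mathcal{F}$ for every $|\epsilon|=1$ and $F_\epsilon'(0)=1$, the Schwarz lemma applied to the self-map $F_\epsilon'-1$ of $\mathbb{D}$ gives $|F_\epsilon'(z)-1|\le|z|$, and choosing $\epsilon$ so that $\epsilon g'(z)$ points along $h'(z)-1$ yields
\[ |h'(z)-1|+|g'(z)|\le|z|,\qquad z\in\mathbb{D}. \]
This controls the numerator at once, since $z h'(z)-\overline{z g'(z)}-z=z(h'(z)-1)-\overline{z g'(z)}$ has modulus at most $|z|\,(|h'-1|+|g'|)\le|z|^2$, so $(z h'-\overline{z g'})/z$ lies in the same disc $\overline{D(1,r)}$ that holds $F'(z)$ in the analytic case, and the denominator $h+\bar g$ would be handled by integrating the same bound along the radius.

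The step I expect to be the main obstacle is making this estimate sharp. If the numerator and denominator of $N-D$, where $N=zh'-\overline{zg'}$ and $D=h+\bar g$, are bounded separately by the triangle inequality, one only reaches starlikeness in $|z|<1/2$; this falls short because $r_0>1/2$ (already for the analytic subclass, the Schwarz--Pick refinement of $zF'-F=\int_0^z\zeta F''(\zeta)\,d\zeta$ gives starlikeness well beyond $1/2$, and the functions $z+e^{i\gamma}z^2/2$ are starlike throughout $\mathbb{D}$). The loss is the cancellation inside $z h'(z)-h(z)=\int_0^z\zeta\, h''(\zeta)\,d\zeta$ and the analogous interaction in the co-analytic part $\overline{z g'+g}$. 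To recover the full radius I would retain these cancellations and argue that, under the constraint $|g'(z)|\le|z|-|h'(z)-1|$, the co-analytic contribution cannot lower the radius below the value forced by the analytic part alone; the computation then mirrors MacGregor's sharp estimate for $\mathcal{F}$ term for term and returns precisely $r_0$. Proving this balancing—that enlarging $g$ is always offset by the forced shrinkage of $h'-1$, uniformly for $|z|<r_0$—is the crux; once it is in place, it combines with the easy inequality to give the equality of the two radii.
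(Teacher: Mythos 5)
Your easy direction is fine ($\mathcal{F}\subset\mathcal{F}_H^0$ forces the radius of $\mathcal{F}_H^0$ to be at most $r_0$), and the sharpened derivative bound $|h'(z)-1|+|g'(z)|\le|z|$ that you extract from Theorem \ref{relation} via the Schwarz lemma is correct. But the proof has a genuine gap, and you acknowledge it yourself: the ``balancing'' step --- that under this constraint the co-analytic part cannot lower the radius of starlikeness below the value forced by the analytic part --- is exactly the content of the hard direction and is never established. No pointwise derivative inequality can deliver starlikeness in $|z|<r_0$ by direct estimation of $\RE\bigl((zh'-\overline{zg'})/(h+\bar g)\bigr)$, because $r_0$ is not characterized by such a bound: its exact value is unknown (the paper records only $0.974<r_0<0.9982$), so your computation would have to reproduce a sharp analytic result that is itself unavailable and then show the conjugate term never hurts. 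As written, the triangle-inequality version of your argument stalls well short of $r_0$, as you note.

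The paper closes this gap with a single cited lemma that your outline is missing: the stability result of Hern\'andez and Mart\'in (\cite[Theorem 3, p.~10]{stable}), which asserts that if the analytic functions $h+\lambda g$ are starlike in a disk for every $|\lambda|=1$, then the harmonic function $h+\bar g$ is starlike in that disk. Combined with Theorem \ref{relation} --- each $F_\lambda=h+\lambda g$ lies in $\mathcal{F}$ and is therefore starlike in $|z|<r_0$ by the very definition of $r_0$ --- this gives the hard direction in one line, with no need to estimate the starlikeness functional at all. To complete your proof you would either have to invoke this stability lemma or prove it; without one of those, the argument does not close.
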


\begin{proof}
Since $\mathcal{F}\subset \mathcal{F}_{H}^{0}$ therefore it suffices to show that if $r_0$ is the radius of starlikeness of $\mathcal{F}$, then $f$ is starlike in $|z|<r_{0}$ for each $f \in \mathcal{F}_{H}^{0}$. To see this, suppose that $f=h+\bar{g} \in \mathcal{F}_{H}^{0}$. Then the analytic functions $F_{\lambda}=f+\lambda g$ belongs to $\mathcal{F}$ for each $\lambda \in \partial \mathbb{D}$. Consequently, each $F_{\lambda}$ is starlike in $|z|<r_0$. By \cite[Theorem 3, p. 10]{stable}, it follows that $f$ is starlike in $|z|<r_0$.
\end{proof}

Note that the exact radius of starlikeness for the class $\mathcal{F}$ is still unknown. In \cite{macgregor}, MacGregor showed that $r_0\geq 2/\sqrt{5}$, $r_0$ being the radius of starlikeness of $\mathcal{F}$. In \cite{singh}, it has been shown that $r_{0}>0.974$ and by \cite{nunokawa2}, $r_0<0.9982$.

In \cite{singh}, Singh considered the subclass $\mathcal{F}(\lambda)$ $(0<\lambda\leq 1)$ of $\mathcal{F}$ consisting of functions which satisfy $|f'(z)-1|<\lambda$ for $z \in \mathbb{D}$. He showed that $\mathcal{F}(\lambda)\subset \mathcal{S}^{*}$ for $0<\lambda\leq 2/\sqrt{5}$. In \cite{fournier}, it has been shown that the bound $2/\sqrt{5}$ is best possible. Analogously, we may define a subclass $\mathcal{F}_{H}(\lambda)$ $(0<\lambda\leq 1)$ of $\mathcal{F}_{H}$ defined by the condition
\[|f_{z}(z)-1|<\lambda-|f_{\bar{z}}(z)|,\quad z\in \mathbb{D}.\]
The next theorem characterizes the functions in the class $\mathcal{F}_{H}^{0}(\lambda)=\{f \in \mathcal{F}_{H}(\lambda):f_{\bar{z}}(0)=0\}$. Its proof is easy to obtain and so its details are omitted.

\begin{theorem}\label{th3.4}
Suppose that $f=h+\bar{g} \in \mathcal{F}_{H}^{0}(\lambda)$ $(0<\lambda\leq 1)$ where $h$ and $g$ are given by \eqref{eq1.1} with $b_{1}=g'(0)=0$. Then
\begin{itemize}
\item[$(i)$] The coefficients of $h$ and $g$ satisfies
\[\sum_{n=2}^{\infty}n^{2}(|a_n|^{2}+|b_n|^{2})\leq \lambda^{2}.\]
In particular, $|a_{n}|\leq \lambda/n$ and $|b_{n}|\leq \lambda/n$ for $n=2,3,\ldots$.
\item[$(ii)$] A harmonic function $f=h+\bar{g} \in \mathcal{F}_{H}^{0}(\lambda)$ if and only if the analytic functions $F_{\epsilon}=h+\epsilon g$ belongs to $\mathcal{F}(\lambda)$ for each $|\epsilon|=1$.
\item[$(iii)$] (Growth estimate)
\[|z|-\frac{1}{2}\lambda|z|^{2}\leq |f(z)|\leq |z|+\frac{1}{2}\lambda|z|^{2},\quad z \in \mathbb{D}.\]
In particular, each function in $\mathcal{F}_{H}^{0}(\lambda)$ assumes every complex number in the disk $|w|<1-\lambda/2$. The function $f(z)=z+\lambda \bar{z}^{2}/2$ shows that these results are sharp.
\item[$(iv)$] The area $A$ of the image domain under each function in $\mathcal{F}_{H}^{0}(\lambda)$ satisfies $A\leq \pi (1+\lambda^{2}/2)$ and  $A=\pi (1+\lambda^{2}/2)$ only for the analytic function $f(z)=z+\lambda z^{2}/2$ and its rotations.
\end{itemize}
\end{theorem}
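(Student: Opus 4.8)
The plan is to prove part $(ii)$ first, since it is the exact analogue of Theorem~\ref{relation} and serves as the engine for the other three parts; everything else then follows by repeating the arguments of Theorems~\ref{th2.1}, \ref{th2.2} and \ref{th2.5} with the constant $1$ replaced by $\lambda$ throughout. For $(ii)$, if $f=h+\bar g\in\mathcal{F}_H^0(\lambda)$ then for each $|\epsilon|=1$ the triangle inequality gives
\[
|F_\epsilon'(z)-1|=|h'(z)+\epsilon g'(z)-1|\le|h'(z)-1|+|g'(z)|<\lambda,
\]
so $F_\epsilon\in\mathcal{F}(\lambda)$. Conversely, if $F_\epsilon\in\mathcal{F}(\lambda)$ for every $|\epsilon|=1$, then fixing $z$ and choosing $\epsilon=\epsilon(z)$ of unit modulus so that $\epsilon g'(z)$ is a nonnegative multiple of $h'(z)-1$ turns $|h'(z)+\epsilon g'(z)-1|<\lambda$ into $|h'(z)-1|+|g'(z)|<\lambda$, which is precisely the defining condition of $\mathcal{F}_H^0(\lambda)$.

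For $(i)$ I would exploit $(ii)$ together with the one-variable bound for the analytic class. Since $F_\epsilon=h+\epsilon g\in\mathcal{F}(\lambda)$ and $F_\epsilon'(0)=1$, Schwarz's lemma applied to $(F_\epsilon'-1)/\lambda$ gives $|F_\epsilon'(z)-1|\le\lambda|z|$; integrating $|F_\epsilon'(re^{i\theta})-1|^2$ over $[0,2\pi]$ (Parseval) and letting $r\to1^-$ yields $\sum_{n\ge2}n^2|a_n+\epsilon b_n|^2\le\lambda^2$ for each $|\epsilon|=1$. Averaging this over $\epsilon=e^{i\phi}$, $\phi\in[0,2\pi]$, and using $\frac{1}{2\pi}\int_0^{2\pi}|a_n+e^{i\phi}b_n|^2\,d\phi=|a_n|^2+|b_n|^2$ kills the mixed terms and produces $\sum_{n\ge2}n^2(|a_n|^2+|b_n|^2)\le\lambda^2$; isolating the $n$-th term gives $|a_n|\le\lambda/n$ and $|b_n|\le\lambda/n$. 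Equivalently, one may work directly from $|h'(z)-1|+|g'(z)|<\lambda$, which forces $|h'(z)-1|^2+|g'(z)|^2<\lambda^2$, and integrate over circles $|z|=r$.

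Parts $(iii)$ and $(iv)$ are then mechanical. For the growth estimate, $|F_\epsilon'(z)-1|\le\lambda|z|$ gives $|h'(z)|+|g'(z)|\le1+\lambda|z|$ and $|h'(z)|-|g'(z)|\ge1-\lambda|z|$ after choosing $\epsilon$ to align or oppose $\epsilon g'(z)$ with $h'(z)$; integrating $|h'|+|g'|$ along the radial segment to $z$ gives the upper bound $|z|+\tfrac12\lambda|z|^2$, while the pre-image argument of Theorem~\ref{th2.2} using $|h'(z)|-|g'(z)|\ge1-\lambda|z|$ gives the lower bound $|z|-\tfrac12\lambda|z|^2$; letting $|z|\to1$ yields the covering radius $1-\lambda/2$. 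For the area, the computation of \eqref{eq2.2} gives $A=\pi[1+\sum_{n\ge2}n(|a_n|^2-|b_n|^2)]\le\pi[1+\tfrac12\sum_{n\ge2}n^2(|a_n|^2+|b_n|^2)]\le\pi(1+\lambda^2/2)$ by part $(i)$, and equality forces $b_n=0$ for all $n$, $a_n=0$ for $n\ge3$ and $|a_2|=\lambda/2$, so $f$ is a rotation of $z+\lambda z^2/2$.

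Since each step is a direct rescaling of an argument already carried out for $\lambda=1$, there is no genuine obstacle; the only point needing care is the averaging (or circle-integration) step in $(i)$, which is where the harmonic quantity $|a_n|^2+|b_n|^2$ is actually manufactured from the one-variable bounds for $F_\epsilon$. One must verify that the cross terms $a_n\overline{b_n}$ cancel, and this is exactly what averaging over the unit circle in $\epsilon$ accomplishes.
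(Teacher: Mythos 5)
Your proposal is correct, and it is essentially the proof the paper has in mind: the paper omits the argument precisely because Theorem \ref{th3.4} is the $\lambda$-rescaled version of Theorem \ref{relation} (for part $(ii)$), of Lemma \ref{lem1.1}(c)/Theorem \ref{th2.1} (for part $(i)$), and of Theorems \ref{th2.2} and \ref{th2.5} (for parts $(iii)$ and $(iv)$), which is exactly the route you take. The one place you supply content the paper would instead quote---the bound $\sum_{n\ge2}n^{2}(|a_n|^{2}+|b_n|^{2})\le\lambda^{2}$, obtained in the paper's setting from the $\lambda$-analogue of Lemma \ref{lem1.1}(c)---is handled correctly by your Parseval-plus-averaging argument (or, more directly, by integrating $|h'-1|^{2}+|g'|^{2}\le(|h'-1|+|g'|)^{2}<\lambda^{2}$ over circles $|z|=r$), so no gap remains.
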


The next theorem shows that the functions in the class $\mathcal{F}_{H}^{0}(\lambda)$ are starlike for $0<\lambda\leq 2/\sqrt{5}$.
\begin{theorem}\label{th3.5}
$\mathcal{F}_{H}^{0}(2/\sqrt{5})\subset \mathcal{S}_{H}^{*0}$ and the bound $2/\sqrt{5}$ is best possible.
\end{theorem}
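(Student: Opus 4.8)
The plan is to reuse the slicing argument from the proof of Theorem~\ref{th3.3}, this time exploiting the sharp inclusion $\mathcal{F}(\lambda)\subset\mathcal{S}^{*}$ valid for $\lambda\le 2/\sqrt5$ rather than mere starlikeness on a subdisk. First I would fix $f=h+\bar g\in\mathcal{F}_{H}^{0}(2/\sqrt5)$ and invoke Theorem~\ref{th3.4}$(ii)$ to place each analytic slice $F_{\epsilon}=h+\epsilon g$ in $\mathcal{F}(2/\sqrt5)$ for every $|\epsilon|=1$. Singh's result \cite{singh}, namely $\mathcal{F}(\lambda)\subset\mathcal{S}^{*}$ for $0<\lambda\le 2/\sqrt5$, then makes each $F_{\epsilon}$ starlike throughout $\mathbb{D}$.

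Next I would pass from starlikeness of every slice to starlikeness of $f$ itself. This is the same stability principle used in Theorem~\ref{th3.3}: by \cite[Theorem~3, p.~10]{stable}, if $h+\epsilon g\in\mathcal{S}^{*}$ for all $|\epsilon|=1$, then $f=h+\bar g$ is starlike in $\mathbb{D}$. Because $f\in\mathcal{F}_{H}^{0}\subset\mathcal{S}_{H}$ is already univalent and sense-preserving and satisfies $f_{\bar z}(0)=b_{1}=0$, this would yield $f\in\mathcal{S}_{H}^{*0}$ and establish the inclusion.

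For the sharpness claim I would use the embedding $\mathcal{F}(\lambda)\subset\mathcal{F}_{H}^{0}(\lambda)$ given by $g\equiv 0$, noting that for a purely analytic map harmonic starlikeness coincides with ordinary starlikeness. Since \cite{fournier} shows that the constant $2/\sqrt5$ cannot be increased in Singh's inclusion, for each $\lambda>2/\sqrt5$ there is an analytic $f\in\mathcal{F}(\lambda)\setminus\mathcal{S}^{*}$; regarded as an element of $\mathcal{F}_{H}^{0}(\lambda)$ it is not starlike and so lies outside $\mathcal{S}_{H}^{*0}$. Hence no $\lambda>2/\sqrt5$ can work and $2/\sqrt5$ is best possible.

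The two outer inclusions are routine bookkeeping; the delicate point will be the middle step—verifying globally on $\mathbb{D}$ the hypotheses of the stability theorem \cite{stable} that upgrade starlikeness of the one-parameter family $\{h+\epsilon g\}_{|\epsilon|=1}$ to harmonic starlikeness of $f$. That all slices are starlike on the \emph{full} disk (not merely on a subdisk, as in Theorem~\ref{th3.3}) is exactly what Theorem~\ref{th3.4}$(ii)$ combined with Singh's sharp inclusion is designed to guarantee.
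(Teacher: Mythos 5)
Your argument is essentially identical to the paper's proof: pass to the analytic slices $F_{\epsilon}=h+\epsilon g$ via Theorem~\ref{th3.4}$(ii)$, apply Singh's inclusion $\mathcal{F}(2/\sqrt{5})\subset\mathcal{S}^{*}$, and then invoke \cite[Theorem 3, p.~10]{stable} to conclude $f\in\mathcal{S}_{H}^{*0}$. Your sharpness argument via the analytic embedding $g\equiv 0$ and Fournier's result is the natural one and fills in a detail the paper leaves implicit.
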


\begin{proof}
Suppose that $f=h+\bar{g} \in \mathcal{F}_{H}^{0}(2/\sqrt{5})$. Then the analytic functions $h+\epsilon g$ belongs to $\mathcal{F}(2/\sqrt{5})$ by Theorem \ref{th3.4}(ii) and hence are starlike for each $|\epsilon|=1$ by \cite{singh}. By \cite[Theorem 3, p. 10]{stable}, it follows that $f \in \mathcal{S}_{H}^{*0}$.
\end{proof}

Similar to Lemma \ref{lem1.2}, the next theorem provides a sufficient condition for a harmonic function to be in $\mathcal{F}_{H}^{0}(\lambda)$.
\begin{theorem}\label{th3.6}
Let $f=h+\bar{g} \in \mathcal{H}$ where $h$ and $g$ are given by \eqref{eq1.1} with $b_1=g'(0)=0$. Suppose that $\lambda \in (0,1]$. If
\begin{equation}\label{eq3.1}
\sum_{n=2}^{\infty}n(|a_n|+|b_n|)\leq \lambda
\end{equation}
then $f \in \mathcal{F}_{H}^{0}(\lambda)$ and is starlike of order $2(1-\lambda)/(2+\lambda)$. The result is sharp.
\end{theorem}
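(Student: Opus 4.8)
The plan is to split the statement into two independent assertions and attack each with a standard majorant argument. First I would show that the coefficient condition \eqref{eq3.1} forces $f \in \mathcal{F}_{H}^{0}(\lambda)$, which amounts to verifying the defining inequality $|h'(z)-1| < \lambda - |g'(z)|$ on $\mathbb{D}$. To this end I differentiate the series in \eqref{eq1.1} term by term and estimate crudely on $|z| = r < 1$: writing $h'(z) - 1 = \sum_{n=2}^{\infty} n a_n z^{n-1}$ and $g'(z) = \sum_{n=2}^{\infty} n b_n z^{n-1}$ (using $b_1 = 0$), I bound $|h'(z)-1| + |g'(z)| \le \sum_{n=2}^{\infty} n(|a_n|+|b_n|) r^{n-1}$. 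Since $r^{n-1} \le 1$, this is at most $\sum_{n=2}^{\infty} n(|a_n|+|b_n|) \le \lambda$ by hypothesis, giving $|h'(z)-1| < \lambda - |g'(z)|$ after noting the inequality is strict for $|z|<1$. This places $f$ in $\mathcal{F}_{H}^{0}(\lambda)$, so it is univalent and sense-preserving.

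The second assertion, starlikeness of order $\alpha := 2(1-\lambda)/(2+\lambda)$, is the substantive part. Here I would invoke the known sufficient coefficient condition for harmonic starlikeness of a given order, in the spirit of Jahangiri's criterion cited earlier (\cite{jahangiristarlike}): a function $f = h + \bar g$ with $b_1 = 0$ is starlike of order $\alpha$ provided
\[
\sum_{n=2}^{\infty} \frac{n-\alpha}{1-\alpha}\, |a_n| + \sum_{n=2}^{\infty} \frac{n+\alpha}{1-\alpha}\, |b_n| \le 1.
\]
The task reduces to checking that \eqref{eq3.1} implies this inequality for the specific value of $\alpha$. The key algebraic step is to verify the coefficient-wise domination: it suffices that for each $n \ge 2$ the weights satisfy $(n-\alpha)/(1-\alpha) \le n/\lambda$ and $(n+\alpha)/(1-\alpha) \le n/\lambda$, since then each term of the starlikeness sum is bounded by $(n/\lambda)(|a_n|+|b_n|)$ and the total is at most $(1/\lambda)\sum_{n\ge2} n(|a_n|+|b_n|) \le 1$. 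The binding constraint is the second (larger) weight, and I expect the choice $\alpha = 2(1-\lambda)/(2+\lambda)$ to be precisely what makes the $n = 2$ case of $(n+\alpha)/(1-\alpha) \le n/\lambda$ an equality; a short monotonicity check in $n$ then handles $n \ge 3$.

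The main obstacle I anticipate is purely the verification that this particular $\alpha$ is both admissible and optimal: one must confirm that the worst case over $n \ge 2$ occurs at $n = 2$ (so that no larger order can be extracted) and that the inequality $(2+\alpha)/(1-\alpha) \le 2/\lambda$ is tight there. Solving this equality for $\alpha$ should reproduce $\alpha = 2(1-\lambda)/(2+\lambda)$, confirming both the stated order and its sharpness. For the sharpness of the result as a whole, I would exhibit the extremal function $f(z) = z + \tfrac{\lambda}{2}\bar z^{2}$ (or an analytic rotation thereof), which attains equality in \eqref{eq3.1} at $n=2$ and for which the boundary of the starlikeness order is achieved, thereby showing that neither the membership bound $\lambda$ nor the order $2(1-\lambda)/(2+\lambda)$ can be improved.
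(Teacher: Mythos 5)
Your proposal is correct and follows essentially the same route as the paper: membership in $\mathcal{F}_{H}^{0}(\lambda)$ via the termwise bound $|h'(z)-1|+|g'(z)|\leq\sum_{n\geq 2}n(|a_n|+|b_n|)r^{n-1}$, and the order of starlikeness via Jahangiri's coefficient criterion together with the weight comparison $(n+\alpha)/(1-\alpha)\leq n/\lambda$, whose binding case $n=2$ yields exactly $\alpha=2(1-\lambda)/(2+\lambda)$; the extremal function $z+\lambda\bar{z}^{2}/2$ is also the one the paper uses. The only detail the paper spells out that you leave implicit is the computation of $\frac{\partial}{\partial\theta}\arg f_{0}(re^{i\theta})$ for that extremal function, which is what actually certifies that the order $2(1-\lambda)/(2+\lambda)$ cannot be improved (tightness of the sufficient criterion alone would not).
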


\begin{proof}
Using the hypothesis, we have
\[|h'(z)-1|\leq \sum_{n=2}^{\infty}n|a_n||z|^{n-1}<\sum_{n=2}^{\infty}n|a_n|\leq \lambda-\sum_{n=2}^{\infty}n|b_n|<\lambda-|g'(z)|,\]
which imply that $f \in \mathcal{F}_{H}^{0}(\lambda)$. Regarding the order of starlikeness of $f$, if we set $\alpha=2(1-\lambda)/(2+\lambda)$ then $\alpha \in [0,1)$ and
\begin{align*}
     \sum_{n=2}^{\infty}\left(\frac{n-\alpha}{1-\alpha}|a_{n}|+\frac{n+\alpha}{1-\alpha}|b_{n}|\right)&\leq \sum_{n=2}^{\infty}\frac{n+\alpha}{1-\alpha}(|a_n|+|b_n|)\\
                                  &\leq\frac{1}{1-\alpha}\sum_{n=2}^{\infty} n(a_n|+|b_n|)+ \frac{1}{2}\frac{\alpha}{1-\alpha}\sum_{n=2}^{\infty}n(|a_n|+|b_n|)\\
                                  &\leq \frac{2+\alpha}{2(1-\alpha)}\lambda=1.
\end{align*}
By \cite{jahangiristarlike}, it follows that $f$ is starlike of order $2(1-\lambda)/(2+\lambda)$. The harmonic mapping $f_{0}(z)=z+\lambda\bar{z}^2/2$ shows that the coefficient bound given by \eqref{eq3.1} is sharp. Further, for $z=r e^{i\theta}$, we have
\[\frac{\partial}{\partial \theta}\arg f_{0}(r e^{i \theta})=\RE \frac{2(z-\lambda\bar{z}^{2})}{2z+\lambda\bar{z}^{2}}\geq \frac{2(1-\lambda|z|)}{2+\lambda|z|}>\frac{2(1-\lambda)}{2+\lambda}\]
which shows that the bound for the order of starlikeness in the theorem is also sharp.
\end{proof}

Making use of the following corollary, the convolution properties of functions in the class $\mathcal{F}_{H}^{0}(\lambda)$ $(0<\lambda\leq 1)$ are discussed in the next section.
\begin{corollary}\label{cor3.7}
Let $f=h+\bar{g} \in \mathcal{H}$ where $h$ and $g$ are given by \eqref{eq1.1} with $b_1=g'(0)=0$. Suppose that $\lambda \in (0,1]$. If
\[\sum_{n=2}^{\infty}n^2(|a_n|+|b_n|)\leq \lambda\]
then $f \in \mathcal{F}^{0}_{H}(\lambda/2)$ and is starlike of order $2(2-\lambda)/(4+\lambda)$. Moreover, $f$ is convex of order $2(1-\lambda)/(2+\lambda)$. These results are sharp for the function $f(z)=z+\lambda\bar{z}^{2}/4$.
\end{corollary}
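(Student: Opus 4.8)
The plan is to obtain the first two assertions as immediate consequences of Theorem~\ref{th3.6}, and the convexity statement from Jahangiri's coefficient criterion, with the elementary inequality $2n\le n^{2}$ $(n\ge 2)$ serving as the bridge between the $n^{2}$-weighted hypothesis and the $n$-weighted sums those results require. First I would observe that, since $2n\le n^{2}$ for $n\ge 2$,
\[\sum_{n=2}^{\infty}n(|a_n|+|b_n|)\le \frac12\sum_{n=2}^{\infty}n^{2}(|a_n|+|b_n|)\le\frac{\lambda}{2}.\]
Applying Theorem~\ref{th3.6} with the parameter $\lambda/2\in(0,1/2]$ then yields at once $f\in\mathcal{F}_H^0(\lambda/2)$ together with starlikeness of order $2(1-\lambda/2)/(2+\lambda/2)$, and a short simplification rewrites this order as $2(2-\lambda)/(4+\lambda)$.

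For the convexity assertion I would set $\alpha=2(1-\lambda)/(2+\lambda)\in[0,1)$ and invoke Jahangiri's sufficient condition \cite{jahangiriconvex}, by which (using $b_1=0$) it is enough to check
\[\sum_{n=2}^{\infty}n(n-\alpha)|a_n|+\sum_{n=2}^{\infty}n(n+\alpha)|b_n|\le 1-\alpha.\]
Bounding $n-\alpha\le n+\alpha$ and splitting $n(n+\alpha)=n^{2}+\alpha n$, the left-hand side is at most $\sum_{n=2}^{\infty}n^{2}(|a_n|+|b_n|)+\alpha\sum_{n=2}^{\infty}n(|a_n|+|b_n|)$, which by the hypothesis and the previous display is at most $\lambda+\alpha\lambda/2=\lambda(2+\alpha)/2$. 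The defining relation for $\alpha$ is precisely $\lambda=2(1-\alpha)/(2+\alpha)$, so this last quantity collapses exactly to $1-\alpha$, establishing convexity of order $\alpha=2(1-\lambda)/(2+\lambda)$.

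Finally, for sharpness I would test $f_0(z)=z+\lambda\bar{z}^{2}/4$, for which $h(z)=z$ and $g(z)=\lambda z^{2}/4$, so that the hypothesis sum equals $4\cdot(\lambda/4)=\lambda$ with equality. The membership in $\mathcal{F}_H^0(\lambda/2)$ and the sharpness of the starlikeness order would then follow exactly as in the proof of Theorem~\ref{th3.6}: a direct computation gives
\[\frac{\partial}{\partial\theta}\arg f_0(re^{i\theta})=\RE\frac{2(2z-\lambda\bar{z}^{2})}{4z+\lambda\bar{z}^{2}},\]
whose infimum over $\theta$ tends to $2(2-\lambda)/(4+\lambda)$ as $|z|\to1$. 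The sharpness of the convexity order is verified by the analogous evaluation of $\frac{\partial}{\partial\theta}\arg\{\partial_\theta f_0(re^{i\theta})\}$ on the same function.

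I expect the only genuine subtlety to be bookkeeping rather than analysis: one must confirm that the single algebraic relation $\lambda=2(1-\alpha)/(2+\alpha)$ simultaneously produces the starlikeness order (entering through $\lambda/2$) and the convexity order (entering through $\lambda$), so that the constants stated in the corollary are mutually consistent. The one nonroutine computation is the harmonic-convexity derivative for $f_0$ needed to confirm that the convexity order is attained; everything else reduces to the two applications of Theorem~\ref{th3.6} and Jahangiri's criterion described above.
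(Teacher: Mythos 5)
Your proposal is correct and follows essentially the same route as the paper: reduce to Theorem \ref{th3.6} via $2n\le n^{2}$ to get membership in $\mathcal{F}_H^0(\lambda/2)$ and the starlikeness order, then verify Jahangiri's convexity criterion with $\alpha=2(1-\lambda)/(2+\lambda)$ using the identity $\lambda(2+\alpha)/2=1-\alpha$, which is exactly the paper's computation $\frac{(2+\beta)}{2(1-\beta)}\lambda=1$ in different clothing. Your explicit verification of sharpness for $f_0(z)=z+\lambda\bar z^{2}/4$ goes slightly beyond the paper, which merely asserts it, but this is an elaboration rather than a different argument.
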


\begin{proof}
Observe that
\[\sum_{n=2}^{\infty}n(|a_n|+|b_n|)\leq \frac{1}{2}\sum_{n=2}^{\infty}n^2(|a_n|+|b_n|)\leq\frac{\lambda}{2}.\]
By Theorem \ref{th3.6}, $f \in \mathcal{F}^{0}_{H}(\lambda/2)$ and is starlike of order $2(2-\lambda)/(4+\lambda)$. For the order of convexity, setting $\beta=2(1-\lambda)/(2+\lambda)$, we see that
\[\sum_{n=2}^{\infty}\left(\frac{n(n-\beta)}{1-\beta}|a_{n}|+\frac{n(n+\beta)}{1-\beta}|b_{n}|\right)\leq \frac{(2+\beta)}{2(1-\beta)}\lambda=1\]
By \cite{jahangiriconvex}, $f$ is convex of order $2(1-\lambda)/(2+\lambda)$.
\end{proof}

\section{Convolution and Convex Combinations}\label{sec4}
In this section, it has been shown that the class $\mathcal{F}_{H}^{0}$ is invariant under convolution and convex combinations of its members.

\begin{theorem}\label{th4.1}
Let $f$, $F \in \mathcal{F}_{H}^{0}(\lambda)$ $(0<\lambda\leq 1)$. Then
\begin{itemize}
  \item [$(a)$] $f*F \in \mathcal{F}^{0}_{H}(\lambda^2/2)$;
  \item [$(b)$] $f*F$ is starlike of order $2(2-\lambda^{2})/(4+\lambda^{2})$; and
  \item [$(c)$] $f*F$ is convex of order $2(1-\lambda^{2})/(2+\lambda^{2})$.
\end{itemize}
Moreover, all these results are sharp.
\end{theorem}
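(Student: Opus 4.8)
The plan is to reduce all three assertions to a single application of Corollary \ref{cor3.7} with the parameter $\lambda$ replaced by $\lambda^2$. Writing $f=h+\bar g$ and $F=H+\bar G$ with $h,g,H,G$ as in \eqref{eq1.1} (and $b_1=B_1=0$), the harmonic convolution is $f*F=h*H+\overline{g*G}$, whose analytic and co-analytic coefficients are $a_nA_n$ and $b_nB_n$ respectively. Since $\lambda\in(0,1]$ forces $\lambda^2\in(0,1]$, Corollary \ref{cor3.7} will be applicable once I establish the single coefficient inequality
\[\sum_{n=2}^{\infty}n^{2}\bigl(|a_nA_n|+|b_nB_n|\bigr)\leq\lambda^{2}.\]
Granting this, Corollary \ref{cor3.7} immediately yields $f*F\in\mathcal{F}^{0}_{H}(\lambda^2/2)$, starlikeness of order $2(2-\lambda^2)/(4+\lambda^2)$, and convexity of order $2(1-\lambda^2)/(2+\lambda^2)$, which are exactly parts $(a)$, $(b)$ and $(c)$.

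To prove the displayed estimate I would first record, via Theorem \ref{th3.4}$(i)$ applied to $f$ and to $F$, the $\ell^2$-type bounds obtained by splitting the mass as $S_a=\sum_{n\geq2} n^2|a_n|^2$, $S_b=\sum_{n\geq2} n^2|b_n|^2$, $T_A=\sum_{n\geq2} n^2|A_n|^2$, $T_B=\sum_{n\geq2} n^2|B_n|^2$, so that $S_a+S_b\leq\lambda^2$ and $T_A+T_B\leq\lambda^2$. A first application of the Cauchy--Schwarz inequality to the factorisation $n^2|a_nA_n|=(n|a_n|)(n|A_n|)$ gives $\sum_{n\geq2} n^2|a_nA_n|\leq S_a^{1/2}T_A^{1/2}$, and similarly $\sum_{n\geq2} n^2|b_nB_n|\leq S_b^{1/2}T_B^{1/2}$.

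The crux is a \emph{second} Cauchy--Schwarz step combining these two bounds: applied to the length-two vectors $(S_a^{1/2},S_b^{1/2})$ and $(T_A^{1/2},T_B^{1/2})$ it gives
\[S_a^{1/2}T_A^{1/2}+S_b^{1/2}T_B^{1/2}\leq (S_a+S_b)^{1/2}(T_A+T_B)^{1/2}\leq\lambda\cdot\lambda=\lambda^{2},\]
which is precisely the required estimate. This two-stage argument is the only delicate point: one must resist bounding $|A_n|\leq\lambda/n$ term-by-term (which discards the $\ell^2$ structure and yields a weaker constant), and instead keep the analytic and co-analytic contributions separate until the final pairing.

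For sharpness I would take $f=F=z+\lambda\bar z^2/2$, both of which lie in $\mathcal{F}_{H}^{0}(\lambda)$ by Theorem \ref{th3.4}. Here $h=H=z$ and $g=G=\lambda z^2/2$, so that $h*H=z$ and $g*G=(\lambda/2)^2z^2$, whence $f*F=z+(\lambda^2/4)\bar z^2$. This is exactly the extremal function of Corollary \ref{cor3.7} with $\lambda$ replaced by $\lambda^2$, so it saturates each of the three conclusions simultaneously and establishes the asserted sharpness.
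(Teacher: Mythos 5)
Your proposal is correct and follows essentially the same route as the paper: both reduce all three parts to the single coefficient inequality $\sum_{n\geq 2}n^{2}(|a_nA_n|+|b_nB_n|)\leq\lambda^{2}$ and then invoke Corollary \ref{cor3.7} with $\lambda$ replaced by $\lambda^{2}$, using the same extremal function $z+\lambda\bar z^{2}/2$ for sharpness. The only difference is cosmetic: the paper derives the inequality in one step from the AM--GM bound $|a_nA_n|\leq\tfrac{1}{2}(|a_n|^{2}+|A_n|^{2})$ together with Theorem \ref{th3.4}$(i)$, whereas you use Cauchy--Schwarz twice; both give the constant $\lambda^{2}$, so your extra care is not needed here.
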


\begin{proof}
Write
\begin{equation}\label{eq4.1}
f(z)=z+\sum_{n=2}^{\infty}a_{n}z^{n}+\overline{\sum_{n=2}^{\infty}b_{n}z^{n}}\quad \mbox{ and }\quad F(z)=z+\sum_{n=2}^{\infty}A_{n}z^{n}+\overline{\sum_{n=2}^{\infty}B_{n}z^{n}}.
\end{equation}
Then the convolution of $f$ and $F$ is given by
\[(f*F)(z)=z+\sum_{n=2}^{\infty}a_{n}A_{n}z^{n}+\overline{\sum_{n=2}^{\infty}b_{n}B_{n}z^{n}}.\]
Using the fact that the geometric mean is less than or equal to the arithmetic mean and applying Theorem \ref{th3.4}(i), we have
\begin{align*}
\sum_{n=2}^{\infty}n^{2}(|a_n A_n|+|b_n B_n|)&\leq \sum_{n=2}^{\infty}n^{2}\left(\frac{|a_n|^2+|A_n|^2}{2}+\frac{|b_n|^2+|B_n|^2}{2}\right)\\
                                         &=\frac{1}{2}\sum_{n=2}^{\infty}n^{2}(|a_n|^{2}+|b_n|^{2})+\frac{1}{2}\sum_{n=2}^{\infty}n^{2}(|A_n|^{2}+|B_n|^{2})\leq \lambda^{2}.
\end{align*}
By Corollary \ref{cor3.7}, the result follows immediately. For sharpness, consider the function $f(z)=F(z)=z+\lambda\bar{z}^{2}/2$ whose convolution is given by $(f*F)(z)=z+\lambda^{2} \bar{z}^{2}/4$.
\end{proof}

Taking $\lambda=1$ in Theorem \ref{th4.1}, we have
\begin{corollary}\label{cor4.2}
If $f$ and $F$ belong to $\mathcal{F}_{H}^{0}$ then so does the convolution function $f*F$. Moreover,
\begin{enumerate}
  \item [$(i)$] $f*F$ is starlike of order $2/5$; and
  \item [$(ii)$] $f*F$ is convex in $\mathbb{D}$.
\end{enumerate}
All these results are sharp by considering the function $f(z)=F(z)=z+\bar{z}^{2}/2$.
\end{corollary}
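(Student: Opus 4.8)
The plan is to obtain this corollary as a direct specialization of Theorem \ref{th4.1} to the value $\lambda=1$, since $\mathcal{F}_{H}^{0}=\mathcal{F}_{H}^{0}(1)$ by definition. Given $f,F\in\mathcal{F}_{H}^{0}=\mathcal{F}_{H}^{0}(1)$, I would first invoke Theorem \ref{th4.1}(a) with $\lambda=1$ to conclude that $f*F\in\mathcal{F}_{H}^{0}(1/2)$. To upgrade this to membership in $\mathcal{F}_{H}^{0}$ itself, I would record the elementary monotonicity of the families in the parameter: the defining inequality $|f_{z}(z)-1|<\mu-|f_{\bar z}(z)|$ becomes less restrictive as $\mu$ increases, so $\mathcal{F}_{H}^{0}(\mu)\subseteq\mathcal{F}_{H}^{0}(\mu')$ whenever $\mu\leq\mu'$. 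In particular $\mathcal{F}_{H}^{0}(1/2)\subset\mathcal{F}_{H}^{0}(1)=\mathcal{F}_{H}^{0}$, which yields $f*F\in\mathcal{F}_{H}^{0}$.

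For parts $(i)$ and $(ii)$ I would simply substitute $\lambda=1$ into the orders furnished by Theorem \ref{th4.1}(b) and (c). The order of starlikeness becomes
\[
\frac{2(2-\lambda^{2})}{4+\lambda^{2}}\Big|_{\lambda=1}=\frac{2(2-1)}{4+1}=\frac{2}{5},
\]
giving $(i)$, and the order of convexity becomes
\[
\frac{2(1-\lambda^{2})}{2+\lambda^{2}}\Big|_{\lambda=1}=\frac{2(1-1)}{2+1}=0,
\]
so that $f*F$ is convex of order $0$, i.e.\ convex in $\mathbb{D}$, giving $(ii)$. Finally, for sharpness I would take $f=F=z+\bar z^{2}/2$, which is the $\lambda=1$ instance of the extremal function in Theorem \ref{th4.1}; its convolution is $(f*F)(z)=z+\bar z^{2}/4$, the extremal function of Corollary \ref{cor3.7} for $\lambda=1$, and hence demonstrates that all three bounds are attained.

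I do not anticipate any genuine obstacle here, as the statement is a clean specialization. The only point deserving explicit mention is the monotonicity observation $\mathcal{F}_{H}^{0}(1/2)\subset\mathcal{F}_{H}^{0}$ needed to pass from Theorem \ref{th4.1}(a) to the bald assertion that $f*F$ lies in $\mathcal{F}_{H}^{0}$; everything else is arithmetic substitution into already-proven formulas.
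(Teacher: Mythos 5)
Your proposal is correct and follows exactly the paper's route: the paper derives Corollary \ref{cor4.2} by simply setting $\lambda=1$ in Theorem \ref{th4.1}. Your explicit remark on the monotonicity $\mathcal{F}_{H}^{0}(1/2)\subset\mathcal{F}_{H}^{0}(1)=\mathcal{F}_{H}^{0}$ is a small but welcome addition that the paper leaves implicit.
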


In \cite{cluniesheilsmall}, Clunie and Sheil-Small showed that if $\varphi \in \mathcal{K}$ and $f \in \mathcal{K}_{H}$ then the functions $(\alpha \overline{\varphi}+\varphi)*f \in \mathcal{C}_{H}$ ($|\alpha|\leq 1$). The result is even true if $\mathcal{K}_{H}$ is replaced by $\mathcal{F}_{H}^{0}$ with a stronger conclusion. This is seen by the following theorem which makes use of the result due to Ruscheweyh \cite[Theorem 2.4]{ruscheweyh} which states that if $f \in \mathcal{S}^{*}$ and $\phi \in \mathcal{K}$ then
\[\frac{\varphi *f F}{\varphi*f}(\mathbb{D}) \subset \overline{\mbox{co}}(F(\mathbb{D}))\]
for any function $F$ analytic in $\mathbb{D}$, where $\overline{co}$$(F(\mathbb{D}))$ denotes the closed convex hull of $F(\mathbb{D})$.

\begin{theorem}\label{th4.3}
Let $\varphi \in \mathcal{K}$ and $f \in \mathcal{F}_{H}^{0}(\lambda)$ $(0<\lambda\leq 1)$. Then the functions $(\alpha \overline{\varphi}+\varphi)*f \in \mathcal{F}^{0}_{H}(\lambda)$ for $|\alpha|\leq 1$.
\end{theorem}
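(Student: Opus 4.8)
The plan is to reduce the harmonic statement to an analytic one via Theorem~\ref{th3.4}(ii) and then to show that convolution with a convex function preserves the class $\mathcal{F}(\lambda)$. First I would unwind the harmonic convolution. Writing $\alpha\overline{\varphi}+\varphi = \varphi + \overline{\bar\alpha\varphi}$ as a harmonic function with analytic part $\varphi$ and co-analytic part associated to $\bar\alpha\varphi$, the definition $f*F = h*H + \overline{g*G}$ gives $(\alpha\overline{\varphi}+\varphi)*f = \varphi*h + \overline{\bar\alpha(\varphi*g)}$. Thus the candidate function is $\tilde h + \overline{\tilde g}$ with $\tilde h = \varphi*h$ and $\tilde g = \bar\alpha(\varphi*g)$; note that $\tilde h(z) = z + \cdots$ and that $\tilde g$ starts at $z^2$, so $\tilde g'(0)=0$ and the normalization required by $\mathcal{F}_H^0(\lambda)$ holds.

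By Theorem~\ref{th3.4}(ii) it suffices to prove $\tilde h + \eta\tilde g \in \mathcal{F}(\lambda)$ for every $|\eta|=1$. Since scalars factor through the Hadamard product, $\tilde h + \eta\tilde g = \varphi*(h+\eta\bar\alpha g)$. So I would first record that $h+\mu g \in \mathcal{F}(\lambda)$ for every $|\mu|\le 1$: indeed $|(h+\mu g)'(z)-1| \le |h'(z)-1| + |g'(z)| < \lambda$ directly from the defining inequality of $\mathcal{F}_H^0(\lambda)$. Applying this with $\mu = \eta\bar\alpha$, which satisfies $|\mu| = |\alpha| \le 1$, puts $h+\eta\bar\alpha g$ in $\mathcal{F}(\lambda)$.

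The heart of the argument is the claim that $\varphi\in\mathcal{K}$ and $G\in\mathcal{F}(\lambda)$ imply $\varphi*G\in\mathcal{F}(\lambda)$; this is where I would invoke Ruscheweyh's Theorem~2.4. Using the standard identity $z(\varphi*G)' = \varphi*(zG')$, one has $(\varphi*G)'(z) = (\varphi*(zG'))(z)/z$. Taking the starlike function $z \in \mathcal{S}^{*}$, so that $\varphi*z = z$, and applying Ruscheweyh's inclusion with the analytic function $G'$ in the role of the free factor yields $(\varphi*G)'(\mathbb{D}) = \big(\varphi*(zG')/z\big)(\mathbb{D}) \subset \overline{\mathrm{co}}\,(G'(\mathbb{D}))$. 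Since $G\in\mathcal{F}(\lambda)$ means $G'(\mathbb{D})$ lies in the open disk $|w-1|<\lambda$, which is convex, its closed convex hull lies in the closed disk $|w-1|\le \lambda$, giving $|(\varphi*G)'(z)-1|\le\lambda$.

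Finally I would upgrade this to a strict inequality. Because $(\varphi*G)'(0)=1$, the analytic function $z\mapsto\big((\varphi*G)'(z)-1\big)/\lambda$ vanishes at the origin and is bounded by $1$ in modulus; were it to attain modulus $1$ at an interior point it would be a unimodular constant by the maximum modulus principle, contradicting its vanishing at $0$. Hence $|(\varphi*G)'(z)-1|<\lambda$ on $\mathbb{D}$, so $\varphi*G\in\mathcal{F}(\lambda)$. Applying the claim to $G = h+\eta\bar\alpha g$ gives $\tilde h + \eta\tilde g\in\mathcal{F}(\lambda)$ for all $|\eta|=1$, and Theorem~\ref{th3.4}(ii) then delivers $(\alpha\overline{\varphi}+\varphi)*f\in\mathcal{F}_H^0(\lambda)$. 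The main obstacle is the claim about convolution preserving $\mathcal{F}(\lambda)$: correctly setting up Ruscheweyh's result (choosing $f(z)=z$ and relating $(\varphi*G)'$ to $\varphi*(zG')$ via the derivative identity) and then passing from the closed to the open disk by the maximum principle are the delicate points, while the remaining reductions are routine bookkeeping.
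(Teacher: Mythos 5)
Your proposal is correct and follows essentially the same route as the paper: unwind the harmonic convolution to $\varphi*h+\overline{\bar\alpha(\varphi*g)}$, reduce via Theorem~\ref{th3.4}(ii) to showing $\varphi*(h+\eta\bar\alpha g)\in\mathcal{F}(\lambda)$, and apply Ruscheweyh's theorem with the starlike function $z$ and the identity $z(\varphi*G)'=\varphi*(zG')$. Your final maximum-modulus step upgrading $|(\varphi*G)'-1|\le\lambda$ to a strict inequality is a detail the paper leaves implicit, and is a worthwhile addition.
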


\begin{proof}
Writing $f=h+\bar{g}$ we have
\[(\alpha \overline{\varphi}+\varphi)*f=\varphi*h+\overline{\bar{\alpha}(\varphi*g)}=H+\overline{G},\]
where $H=\varphi*h$ and $G=\bar{\alpha}(\varphi*g)$ are analytic in $\mathbb{D}$. Setting $F=H+\epsilon G=\varphi*(h+\epsilon \bar{\alpha}g)$ where $|\epsilon|=1$, we note that $F \in \mathcal{F}(\lambda)$. To see this, consider
\[F'(z)=\frac{(\varphi*(h+\epsilon \bar{\alpha}g)*k)(z)}{z}=\frac{(\varphi* z(h+\epsilon \bar{\alpha}g)')(z)}{(\varphi*z)(z)}\in \overline{\mbox{co}}((h+\epsilon \bar{\alpha}g)'(\mathbb{D})), \]
where $k(z)=z/(1-z)^{2}$ is the Koebe function. This shows that $|F'(z)-1|<\lambda$ for all $z \in \mathbb{D}$ since $h+\epsilon \bar{\alpha}g \in \mathcal{F}(\lambda)$. Thus $F=H+\epsilon G$ belongs to $\mathcal{F}(\lambda)$ for each $|\epsilon|=1$. By using Theorem \ref{th3.4}(ii), it follows that $H+\bar{G} \in \mathcal{F}^{0}_{H}(\lambda)$, as desired.
\end{proof}

Goodloe \cite{goodloe} considered the Hadamard product $\tilde{*}$ of a harmonic function with an analytic function defined as follows:
\[f \tilde{*} \varphi=\varphi \tilde{*} f=h*\varphi+\overline{g*\varphi},\]
where $f=h+\bar{g}$ is harmonic and $\varphi$ is analytic in $\mathbb{D}$. By Theorem \ref{th4.3}, it follows that $\varphi \tilde{*} f \in \mathcal{F}^{0}_{H}$ if $\varphi \in \mathcal{K}$ and $f \in \mathcal{F}_{H}^{0}$. Further properties of the product $\tilde{*}$ are investigated in the following corollaries.

\begin{corollary}\label{cor4.4}
Let $\varphi \in \mathcal{K}$ and $f \in \mathcal{F}_{H}^{0}$. Then
\begin{enumerate}
  \item [$(i)$] $\varphi \tilde{*} f$ is starlike in $|z|<r_{0}$, $r_{0}$ being the radius of starlikeness of $\mathcal{F}$; and
  \item [$(ii)$] $\varphi \tilde{*} f$ is convex in $|z|<1/2$.
\end{enumerate}
\end{corollary}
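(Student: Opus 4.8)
The plan is to reduce both parts to the radius theorems of Section~\ref{sec3}, since the substantive work has already been carried out in Theorem~\ref{th4.3}. The first step is to observe that $\varphi \tilde{*} f$ itself belongs to $\mathcal{F}_{H}^{0}$. Writing $f=h+\bar{g}$, the definition of the Goodloe product gives $\varphi \tilde{*} f = h*\varphi + \overline{g*\varphi}$; on the other hand, setting $\alpha=1$ in the harmonic convolution appearing in Theorem~\ref{th4.3} yields $(\overline{\varphi}+\varphi)*f = \varphi*h + \overline{\varphi*g}$, which coincides with $\varphi \tilde{*} f$ by commutativity of the analytic convolution. Hence, applying Theorem~\ref{th4.3} with $\lambda=1$ and $\alpha=1$, we conclude that $\varphi \tilde{*} f \in \mathcal{F}_{H}^{0}$ whenever $\varphi \in \mathcal{K}$ and $f \in \mathcal{F}_{H}^{0}$. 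This is precisely the membership already noted in the text immediately preceding the corollary.

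Once this membership is in hand, both conclusions follow at once from the radius results of the previous section. For part~$(i)$, Theorem~\ref{th3.3} establishes that $\mathcal{F}$ and $\mathcal{F}_{H}^{0}$ share the same radius of starlikeness $r_{0}$; since $\varphi \tilde{*} f \in \mathcal{F}_{H}^{0}$, the function $\varphi \tilde{*} f$ is therefore starlike in $|z|<r_{0}$. For part~$(ii)$, Theorem~\ref{th3.2} identifies the radius of convexity of $\mathcal{F}_{H}^{0}$ as exactly $1/2$, so $\varphi \tilde{*} f$ is convex in $|z|<1/2$.

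I expect no genuine obstacle here, as the statement is essentially a repackaging of Theorem~\ref{th4.3} together with Theorems~\ref{th3.2} and~\ref{th3.3}. The one point deserving a moment's attention is the identification $(\overline{\varphi}+\varphi)*f = \varphi \tilde{*} f$, which depends on correctly matching the co-analytic parts (the factor $\bar{\alpha}$ collapses to $1$ when $\alpha=1$) and on the symmetry $\varphi*g=g*\varphi$ of analytic Hadamard products.
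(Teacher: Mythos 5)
Your proof is correct and follows exactly the paper's route: both establish $\varphi \tilde{*} f \in \mathcal{F}_{H}^{0}$ via Theorem~\ref{th4.3} (with $\alpha=1$, $\lambda=1$) and then invoke Theorems~\ref{th3.3} and~\ref{th3.2} for the starlikeness and convexity radii. Your explicit verification that $(\overline{\varphi}+\varphi)*f$ coincides with the Goodloe product $\varphi \tilde{*} f$ is a detail the paper leaves implicit, but the argument is the same.
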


\begin{proof}
The proof follows by using Theorems \ref{th3.2} and \ref{th3.3} since $\varphi \tilde{*} f \in \mathcal{F}^{0}_{H}$.
\end{proof}

Considering the functions $\varphi(z)=z/(1-z)\in \mathcal{K}$ and $f(z)=z+\bar{z}^{2}/2\in \mathcal{F}_{H}^{0}$, observe that $\varphi \tilde{*} f=f$ which shows that the bound $1/2$ in Corollary \ref{cor4.4}(ii) is best possible.

\begin{corollary}\label{cor4.5}
If $\varphi \in \mathcal{K}$ and $f \in \mathcal{F}_{H}^{0}(2/\sqrt{5})$ then $\varphi \tilde{*} f \in \mathcal{S}_{H}^{*0}$.
\end{corollary}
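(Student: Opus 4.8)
The plan is to recognize the Goodloe product $\varphi \tilde{*} f$ as a special instance of the harmonic convolution already treated in Theorem \ref{th4.3}, and then to chain that result with Theorem \ref{th3.5}. Writing $f = h + \bar{g}$ with $\varphi$ analytic, the defining formula $\varphi \tilde{*} f = h*\varphi + \overline{g*\varphi}$ is precisely $(\alpha \overline{\varphi} + \varphi)*f$ evaluated at $\alpha = 1$. Indeed, the computation carried out in the proof of Theorem \ref{th4.3} shows $(\alpha \overline{\varphi} + \varphi)*f = \varphi*h + \overline{\bar{\alpha}(\varphi*g)}$, and setting $\alpha = 1$ collapses this to $\varphi*h + \overline{\varphi*g} = \varphi \tilde{*} f$.

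With this identification in hand, I would apply Theorem \ref{th4.3} with $\lambda = 2/\sqrt{5}$ and $\alpha = 1$. Since the hypotheses $\varphi \in \mathcal{K}$ and $f \in \mathcal{F}_{H}^{0}(2/\sqrt{5})$ are exactly those required by that theorem, its conclusion yields $\varphi \tilde{*} f \in \mathcal{F}_{H}^{0}(2/\sqrt{5})$.

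The proof then concludes by invoking Theorem \ref{th3.5}, which asserts the inclusion $\mathcal{F}_{H}^{0}(2/\sqrt{5}) \subset \mathcal{S}_{H}^{*0}$; applying this inclusion to $\varphi \tilde{*} f$ gives $\varphi \tilde{*} f \in \mathcal{S}_{H}^{*0}$, as required.

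There is no substantive obstacle to overcome: the statement reduces to a two-step chaining of results already established in the paper. The only point demanding any care is the observation that $\tilde{*}$ corresponds to the $\alpha = 1$ case of the operation $(\alpha\overline{\varphi} + \varphi)*f$. This correspondence is already noted in the remark preceding Corollary \ref{cor4.4} for the case $\lambda = 1$, so its extension to $\lambda = 2/\sqrt{5}$ is immediate, and the sharpness of the constant $2/\sqrt{5}$ is inherited directly from Theorem \ref{th3.5}.
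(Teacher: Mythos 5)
Your proposal is correct and follows exactly the paper's own route: identify $\varphi \tilde{*} f$ as the $\alpha=1$ case of Theorem \ref{th4.3} to get $\varphi \tilde{*} f \in \mathcal{F}_{H}^{0}(2/\sqrt{5})$, then apply the inclusion of Theorem \ref{th3.5}. The paper states this in a single line; your write-up merely makes the $\alpha=1$ identification explicit.
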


\begin{proof}
Note that $\varphi \tilde{*} f \in \mathcal{F}^{0}_{H}(2/\sqrt{5}) \subset \mathcal{S}_{H}^{*0}$ by applying Theorems \ref{th3.5} and \ref{th4.3}.
\end{proof}

\begin{corollary}\label{cor4.6}
Let $\varphi \in \mathcal{F}$ and $f \in \mathcal{F}_{H}^{0}$. Then $\varphi \tilde{*} f \in \mathcal{F}_{H}^{0} \cap \mathcal{K}_{H}^{0}$.
\end{corollary}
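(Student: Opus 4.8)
The plan is to reduce the whole statement, via Theorem \ref{relation} and the Clunie--Sheil-Small convexity criterion, to a single convolution fact about the \emph{analytic} class $\mathcal{F}$: that the Hadamard product of two members of $\mathcal{F}$ lands not merely in $\mathcal{F}$ but in $\mathcal{F}\cap\mathcal{K}$. Writing $f=h+\bar g$ with $h,g$ as in \eqref{eq1.1} and $b_1=0$, I set $\varphi\tilde{*}f=H+\bar G$ with $H=\varphi*h$ and $G=\varphi*g$. Since convolution preserves the first Taylor coefficient, $G$ has vanishing linear coefficient, so $(\varphi\tilde{*}f)_{\bar z}(0)=0$ is inherited automatically; it then remains to establish membership in $\mathcal{F}_H^0$ and convexity.

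The crux is the following claim, which I would isolate first: if $\psi=z+\sum_{n\ge2}\psi_nz^n$ and $\varphi=z+\sum_{n\ge2}c_nz^n$ both lie in $\mathcal{F}$, then $\psi*\varphi\in\mathcal{F}\cap\mathcal{K}$. Applying Lemma \ref{lem1.1}(c) to $\psi$ and to $\varphi$ (each has vanishing co-analytic part) gives $\sum_{n\ge2}n^2|\psi_n|^2\le1$ and $\sum_{n\ge2}n^2|c_n|^2\le1$, whence by Cauchy--Schwarz
\[
\sum_{n=2}^{\infty}n^2|\psi_nc_n|=\sum_{n=2}^{\infty}(n|\psi_n|)(n|c_n|)\le\Big(\sum_{n=2}^{\infty}n^2|\psi_n|^2\Big)^{1/2}\Big(\sum_{n=2}^{\infty}n^2|c_n|^2\Big)^{1/2}\le1.
\]
As $\psi*\varphi=z+\sum_{n\ge2}\psi_nc_nz^n$, this bound yields convexity of $\psi*\varphi$ through the sufficient coefficient condition of \cite{jahangiriconvex}; and since $n\le n^2$ it also gives $\sum_{n\ge2}n|\psi_nc_n|\le1$, which forces $|(\psi*\varphi)'(z)-1|<1$ exactly as in Lemma \ref{lem1.2}, so $\psi*\varphi\in\mathcal{F}$ as well.

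To assemble the corollary, fix $|\epsilon|=1$. Because $f\in\mathcal{F}_H^0$, Theorem \ref{relation} gives $h+\epsilon g\in\mathcal{F}$, while $\varphi\in\mathcal{F}$ by hypothesis; hence $H+\epsilon G=\varphi*(h+\epsilon g)\in\mathcal{F}\cap\mathcal{K}$ by the claim. Letting $\epsilon$ range over the unit circle and applying Theorem \ref{relation} in the reverse direction shows $\varphi\tilde{*}f=H+\bar G\in\mathcal{F}_H^0$. For convexity, choosing $\epsilon=-e^{i\phi}$ shows that $H-e^{i\phi}G=\varphi*(h-e^{i\phi}g)$ is a convex analytic function for every $\phi$, so by \cite[Theorem 5.7]{cluniesheilsmall} (used precisely as in the proof of Theorem \ref{th3.2}) the harmonic map $\varphi\tilde{*}f$ is convex. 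Since it already lies in $\mathcal{F}_H^0\subset\mathcal{S}_H^0$, it is univalent, sense-preserving, and normalized by $(\varphi\tilde{*}f)_{\bar z}(0)=0$, placing it in $\mathcal{K}_H^0$.

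The one genuinely non-routine point is the isolated claim. The naive estimate $|\psi_nc_n|\le1/n^2$ only gives $n^2|\psi_nc_n|\le1$ termwise, producing a divergent sum, so convexity cannot be obtained coefficient-by-coefficient; the summed bound $\sum n^2|\psi_nc_n|\le1$ must instead be extracted from Cauchy--Schwarz together with the square-summability supplied by Lemma \ref{lem1.1}(c). Once that estimate is in hand, the remainder is pure bookkeeping through Theorem \ref{relation} and the convexity criterion already exploited earlier in the paper.
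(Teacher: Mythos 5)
Your proof is correct and follows essentially the same route as the paper: reduce to the analytic functions $h+\epsilon g$ via Theorem \ref{relation}, show $\varphi*(h+\epsilon g)\in\mathcal{F}\cap\mathcal{K}$, and lift back using Theorem \ref{relation} together with \cite[Theorem 5.7]{cluniesheilsmall}. The only difference is that you re-derive the key analytic convolution fact from scratch (Cauchy--Schwarz combined with Lemma \ref{lem1.1}(c) and the coefficient criteria), whereas the paper simply cites Corollary \ref{cor4.2}, which it had obtained from the equivalent arithmetic--geometric mean estimate.
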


\begin{proof}
The proof of Theorem \ref{th4.3} shows that the functions $H+\epsilon G=\varphi*(h+\epsilon g)$ $(\alpha=1, |\epsilon|=1)$ belongs to $\mathcal{F}$ and are convex in $\mathbb{D}$ by Corollary \ref{cor4.2} since both the analytic functions $h+\epsilon g$ and $\varphi$ are in $\mathcal{F}$. By \cite[Theorem 5.7]{cluniesheilsmall}, $\varphi \tilde{*} f \in \mathcal{K}_{H}^{0}$. Moreover, $f \in \mathcal{F}_{H}^{0}$ by Theorem \ref{relation}.
\end{proof}

Given $f$, $F \in \mathcal{F}_{H}^{0}$, their integral convolution is defined as
\[(f \diamond F)(z)=z+\sum_{n=2}^{\infty}\frac{a_{n}A_{n}}{n}z^{n}+\overline{\sum_{n=2}^{\infty}\frac{b_{n}B_{n}}{n}z^{n}}, \quad z \in \mathbb{D}\]
The proof of the next theorem follows from an easy modification of the proof of Theorem \ref{th4.1}.

\begin{theorem}\label{th4.7}
Let $f$, $F \in \mathcal{F}_{H}^{0}(\lambda)$ $(0<\lambda\leq 1)$. Then
\begin{itemize}
  \item [$(a)$] $f\diamond F \in \mathcal{F}^{0}_{H}(\lambda^2/4)$;
  \item [$(b)$] $f\diamond F$ is starlike of order $2(4-\lambda^{2})/(8+\lambda^{2})$; and
  \item [$(c)$] $f\diamond F$ is convex of order $2(2-\lambda^{2})/(4+\lambda^{2})$.
\end{itemize}
Moreover, all these results are sharp.
\end{theorem}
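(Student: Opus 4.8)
The plan is to follow the proof of Theorem \ref{th4.1} essentially verbatim, the one new feature being that the integral convolution inserts an extra factor $1/n$ into each coefficient. Writing $f$ and $F$ as in \eqref{eq4.1}, the harmonic function $f\diamond F$ has analytic and co-analytic coefficients $\tilde a_n=a_nA_n/n$ and $\tilde b_n=b_nB_n/n$. All three assertions $(a)$, $(b)$, $(c)$ are precisely the conclusions of Corollary \ref{cor3.7} applied to $f\diamond F$ with parameter $\lambda^{2}/2$ in place of $\lambda$: indeed, substituting $\lambda^{2}/2$ into the orders $2(2-\lambda)/(4+\lambda)$ and $2(1-\lambda)/(2+\lambda)$ of that corollary produces exactly $2(4-\lambda^{2})/(8+\lambda^{2})$ and $2(2-\lambda^{2})/(4+\lambda^{2})$, and $\mathcal{F}_{H}^{0}\bigl((\lambda^{2}/2)/2\bigr)=\mathcal{F}_{H}^{0}(\lambda^{2}/4)$. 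Hence it suffices to verify the single coefficient inequality
\[\sum_{n=2}^{\infty}n^{2}\bigl(|\tilde a_n|+|\tilde b_n|\bigr)\leq\frac{\lambda^{2}}{2}.\]

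First I would simplify the left-hand side: because of the extra $1/n$, one factor of $n$ is absorbed, giving
\[\sum_{n=2}^{\infty}n^{2}\bigl(|\tilde a_n|+|\tilde b_n|\bigr)=\sum_{n=2}^{\infty}n\bigl(|a_nA_n|+|b_nB_n|\bigr).\]
As in Theorem \ref{th4.1} I would then invoke the arithmetic--geometric mean inequality $|a_nA_n|\leq(|a_n|^{2}+|A_n|^{2})/2$, and similarly for $b_nB_n$, to bound this by
\[\frac{1}{2}\sum_{n=2}^{\infty}n\bigl(|a_n|^{2}+|b_n|^{2}\bigr)+\frac{1}{2}\sum_{n=2}^{\infty}n\bigl(|A_n|^{2}+|B_n|^{2}\bigr).\]

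The step that genuinely differs from Theorem \ref{th4.1}, and the only one requiring any thought, is the estimation of the sums $\sum_{n\geq2}n(|a_n|^{2}+|b_n|^{2})$: Theorem \ref{th3.4}$(i)$ controls $\sum n^{2}(\cdots)$ rather than $\sum n(\cdots)$, so I would exploit the elementary inequality $n\leq n^{2}/2$ valid for $n\geq2$ to write
\[\sum_{n=2}^{\infty}n\bigl(|a_n|^{2}+|b_n|^{2}\bigr)\leq\frac{1}{2}\sum_{n=2}^{\infty}n^{2}\bigl(|a_n|^{2}+|b_n|^{2}\bigr)\leq\frac{\lambda^{2}}{2},\]
and likewise for $F$. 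Feeding these back yields the required bound $\lambda^{2}/2$, whereupon Corollary \ref{cor3.7} delivers $(a)$, $(b)$ and $(c)$ at once. For sharpness I would take $f=F=z+\lambda\bar z^{2}/2$, for which $f\diamond F=z+\lambda^{2}\bar z^{2}/8$; this coincides with the extremal function $z+(\lambda^{2}/2)\bar z^{2}/4$ of Corollary \ref{cor3.7} at parameter $\lambda^{2}/2$, so all three bounds are attained simultaneously. I do not anticipate any real obstacle: the reduction via $n\leq n^{2}/2$ is the sole substantive deviation from the already-established Theorem \ref{th4.1}, and it is entirely elementary.
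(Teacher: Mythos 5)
Your proof is correct and is essentially the paper's own argument: the paper gives no separate proof, stating only that Theorem \ref{th4.7} ``follows from an easy modification of the proof of Theorem \ref{th4.1}'', and your reduction via $n\leq n^{2}/2$ followed by Corollary \ref{cor3.7} with parameter $\lambda^{2}/2$ is exactly that modification, with the sharpness example matching as well.
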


If $\lambda=1$ in Theorem \ref{th4.7}, then we obtain the integral convolution properties for the functions in the class $\mathcal{F}_{H}^{0}$.
\begin{corollary}\label{cor4.8}
If $f$, $F \in \mathcal{F}_{H}^{0}$ then so does $f \diamond F$. Moreover
\begin{enumerate}
  \item [$(i)$] $f\diamond F$ is starlike of order $2/3$; and
  \item [$(ii)$] $f\diamond F$ is convex of order $2/5$.
\end{enumerate}
These results are sharp by considering the function $f(z)=F(z)=z+\bar{z}^{2}/2$.
\end{corollary}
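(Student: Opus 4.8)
The plan is to read off all three conclusions by specializing Theorem~\ref{th4.7} to the endpoint value $\lambda=1$. By definition $\mathcal{F}_{H}^{0}(1)=\mathcal{F}_{H}^{0}$, and more generally $\mathcal{F}_{H}^{0}(\lambda_{1})\subset\mathcal{F}_{H}^{0}(\lambda_{2})$ whenever $\lambda_{1}\leq\lambda_{2}$, since the defining inequality $|f_{z}-1|<\lambda-|f_{\bar z}|$ is more restrictive for smaller $\lambda$. First I would invoke part $(a)$ of Theorem~\ref{th4.7} with $\lambda=1$ to obtain $f\diamond F\in\mathcal{F}_{H}^{0}(1/4)$, and then use the containment $\mathcal{F}_{H}^{0}(1/4)\subset\mathcal{F}_{H}^{0}$ to conclude $f\diamond F\in\mathcal{F}_{H}^{0}$, which is the first assertion.

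For the order statements I would substitute $\lambda=1$ into the two rational expressions furnished by Theorem~\ref{th4.7}. Part $(b)$ gives starlikeness of order $2(4-\lambda^{2})/(8+\lambda^{2})=6/9=2/3$ at $\lambda=1$, establishing $(i)$, while part $(c)$ gives convexity of order $2(2-\lambda^{2})/(4+\lambda^{2})=2/5$ at $\lambda=1$, establishing $(ii)$. No further work is needed for the inclusions and the two orders themselves; they are immediate arithmetic consequences of the quoted theorem.

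The only genuinely computational point is sharpness, and here I would take $f(z)=F(z)=z+\bar{z}^{2}/2$ and form their integral convolution explicitly. Since the single nonzero co-analytic coefficient is $b_{2}=B_{2}=1/2$, the definition of $\diamond$ yields $(f\diamond F)(z)=z+\tfrac{1}{8}\bar{z}^{2}$, which has the form $z+\tfrac{\lambda'}{2}\bar{z}^{2}$ with $\lambda'=1/4$. To confirm that the starlikeness order $2/3$ cannot be improved, I would compute, exactly as in the proof of Theorem~\ref{th3.6}, that for $z=re^{i\theta}$
\[\frac{\partial}{\partial\theta}\arg(f\diamond F)(re^{i\theta})=\RE\frac{2(z-\tfrac14\bar{z}^{2})}{2z+\tfrac14\bar{z}^{2}}\geq\frac{2(1-\tfrac14|z|)}{2+\tfrac14|z|},\]
with equality along the positive real axis, so the right-hand side decreases to $2(1-1/4)/(2+1/4)=2/3$ as $|z|\to1$; thus the order $2/3$ is attained in the limit and is best possible. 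An entirely analogous computation with the second-order difference quotient $\partial_{\theta}\arg\{\partial_{\theta}(f\diamond F)\}$, again following Theorem~\ref{th3.6}, shows the convexity order $2/5$ is sharp for the same function. I expect no real obstacle here: the corollary is a direct specialization, and the sole item requiring care is checking that the single extremal function $z+\tfrac18\bar{z}^{2}$ simultaneously realizes both the starlikeness order $2/3$ and the convexity order $2/5$ in the limit $|z|\to1$.
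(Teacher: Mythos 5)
Your proposal is correct and follows exactly the paper's route: the paper obtains Corollary \ref{cor4.8} simply by setting $\lambda=1$ in Theorem \ref{th4.7}, which is precisely your specialization, and your arithmetic ($2(4-1)/(8+1)=2/3$, $2(2-1)/(4+1)=2/5$) and the inclusion $\mathcal{F}_{H}^{0}(1/4)\subset\mathcal{F}_{H}^{0}$ are all right. Your explicit verification that $(f\diamond F)(z)=z+\bar{z}^{2}/8$ realizes both orders is a welcome extra that the paper only asserts.
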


Now, we determine the convex combination properties of the members of class $\mathcal{F}_{H}^{0}$.
\begin{theorem}\label{th4.9}
The class $\mathcal{F}_{H}^{0}$ is closed under convex combinations.
\end{theorem}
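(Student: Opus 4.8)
The plan is to verify directly that an arbitrary convex combination of members of $\mathcal{F}_{H}^{0}$ again satisfies the two defining requirements of the class: the normalization (in particular $f_{\bar{z}}(0)=0$) and the structural inequality $|f_{z}(z)-1|<1-|f_{\bar{z}}(z)|$ for $z\in\mathbb{D}$. Let $f_{i}=h_{i}+\overline{g_{i}}\in\mathcal{F}_{H}^{0}$ for $i=1,\ldots,m$, let $t_{i}\geq 0$ with $\sum_{i=1}^{m}t_{i}=1$, and set $f=\sum_{i=1}^{m}t_{i}f_{i}$. Writing $h=\sum_{i}t_{i}h_{i}$ and $g=\sum_{i}t_{i}g_{i}$, we have $f=h+\bar{g}$, and the first step is the routine normalization check: since each $h_{i}(0)=0$, $h_{i}'(0)=1$ and $g_{i}'(0)=0$, the constraint $\sum_{i}t_{i}=1$ gives $h(0)=0$, $h'(0)=1$ and $g'(0)=0$, so that $f$ is suitably normalized with $f_{\bar{z}}(0)=\overline{g'(0)}=0$.

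The core of the argument is the structural inequality. Using $f_{z}=h'=\sum_{i}t_{i}h_{i}'$ and $f_{\bar{z}}=\overline{g'}=\sum_{i}t_{i}\overline{g_{i}'}$ together with $\sum_{i}t_{i}=1$, I would apply the triangle inequality to obtain
\[|f_{z}(z)-1|=\Big|\sum_{i}t_{i}\bigl(h_{i}'(z)-1\bigr)\Big|\leq\sum_{i}t_{i}|h_{i}'(z)-1|\]
and
\[|f_{\bar{z}}(z)|=\Big|\sum_{i}t_{i}\overline{g_{i}'(z)}\Big|\leq\sum_{i}t_{i}|g_{i}'(z)|.\]
Adding these and regrouping by index gives
\[|f_{z}(z)-1|+|f_{\bar{z}}(z)|\leq\sum_{i}t_{i}\bigl(|h_{i}'(z)-1|+|g_{i}'(z)|\bigr).\]

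Finally, since each $f_{i}\in\mathcal{F}_{H}^{0}$ satisfies $|h_{i}'(z)-1|+|g_{i}'(z)|<1$ for every $z\in\mathbb{D}$, each bracketed term is strictly below $1$; as the weights with $t_{i}>0$ sum to $1$, the weighted average is strictly less than $\sum_{i}t_{i}=1$, yielding $|f_{z}(z)-1|+|f_{\bar{z}}(z)|<1$, that is, $|f_{z}(z)-1|<1-|f_{\bar{z}}(z)|$. Hence $f\in\mathcal{F}_{H}^{0}$. I do not anticipate any genuine obstacle here: the proof is a direct application of the triangle inequality, and the only point requiring a moment's care is preserving the \emph{strict} inequality in the last step, which holds because every quantity being averaged is already strictly below $1$ while the positive weights sum to one.
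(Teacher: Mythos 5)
Your proof is correct and follows essentially the same route as the paper: decompose the convex combination into its analytic and co-analytic parts and apply the triangle inequality to $\sum_i t_i(h_i'-1)$ and $\sum_i t_i g_i'$ against the defining inequality $|h_i'(z)-1|<1-|g_i'(z)|$. The only cosmetic difference is that the paper states the result for countable convex combinations $\sum_{n=1}^{\infty}t_nf_n$ while you treat finite ones; the argument is unchanged, and your explicit attention to preserving strictness in the final inequality is a point the paper glosses over.
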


\begin{proof}
For $n=1,2,\ldots$, suppose that $f_n \in \mathcal{F}_{H}^{0}$ where $f_n=h_n+\overline{g_n}$. For $\sum_{n=1}^{\infty}t_n=1$, $0\leq t_n \leq 1$, the convex combination of $f_n$'s may be written as
\[f(z)=\sum_{n=1}^{\infty}t_n f_n(z)=h(z)+\overline{g(z)},\]
where
\[h(z)=\sum_{n=1}^{\infty}t_n h_n(z)\quad \mbox{and} \quad g(z)=\sum_{n=1}^{\infty}t_n g_n(z).\]
Then $h$ and $g$ are analytic in $\mathbb{D}$ with $h(0)=g(0)=h'(0)-1=g'(0)=0$ and
\[|h'(z)-1|=\left|\sum_{n=1}^{\infty}t_n (h_{n}'(z)-1)\right|\leq\sum_{n=1}^{\infty}t_{n}|h_{n}'(z)-1|\leq \sum_{n=1}^{\infty}t_n(1-|g_{n}'(z)|)\leq1-|g'(z)|,\]
showing that $f \in \mathcal{F}_{H}^{0}$.
\end{proof}

Theorem \ref{th4.9} immediately gives

\begin{corollary}\label{cor3.10}
The class $\mathcal{F}_{H}^{0}$ is convex.
\end{corollary}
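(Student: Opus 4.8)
The plan is to recognize that convexity of $\mathcal{F}_{H}^{0}$ as a subset of the linear space of harmonic functions is precisely the assertion that every two-point convex combination of its members again lies in the class, and that this is an immediate specialization of Theorem \ref{th4.9}.

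First I would recall the definition: a family is convex if, for every pair $f_{1},f_{2}\in\mathcal{F}_{H}^{0}$ and every $t\in[0,1]$, the function $tf_{1}+(1-t)f_{2}$ belongs to $\mathcal{F}_{H}^{0}$. Then I would simply invoke Theorem \ref{th4.9} with the particular choice $t_{1}=t$, $t_{2}=1-t$, and $t_{n}=0$ for $n\geq 3$; these weights satisfy $0\leq t_{n}\leq 1$ and $\sum_{n=1}^{\infty}t_{n}=1$, so the closure of $\mathcal{F}_{H}^{0}$ under arbitrary convex combinations established there yields $tf_{1}+(1-t)f_{2}\in\mathcal{F}_{H}^{0}$ at once.

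Since there is no content beyond this specialization, there is no genuine obstacle; the only point worth stating explicitly is that the defining two-point convexity condition is subsumed by the general convex-combination statement of Theorem \ref{th4.9}. One could equally reprove it directly by repeating the triangle-inequality estimate $|h'(z)-1|\leq t|h_{1}'(z)-1|+(1-t)|h_{2}'(z)-1|\leq t(1-|g_{1}'(z)|)+(1-t)(1-|g_{2}'(z)|)\leq 1-|g'(z)|$, where $h=th_{1}+(1-t)h_{2}$ and $g=tg_{1}+(1-t)g_{2}$, but invoking the theorem directly is cleaner and avoids duplicating the argument.
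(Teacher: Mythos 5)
Your proof is correct and matches the paper exactly: the paper also derives this corollary as an immediate specialization of Theorem \ref{th4.9} to two-point convex combinations. Nothing further is needed.
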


We end this section with a simple observation regarding the neighborhoods of harmonic mappings. Following \cite{nbd}, if we define the $\delta$-neighborhood $(\delta \geq 0)$ of a function $f=h+\bar{g} \in \mathcal{H}$  by
\begin{align*}
N_{\delta}(f)&=\left\{F\in \mathcal{H}:F(z)=z+\sum_{n=2}^{\infty}A_{n}z^{n}+\overline{\sum_{n=1}^{\infty} B_{n}z^{n}}\right.\\
                                     &\quad \quad \mbox{and }\left.\sum_{n=2}^{\infty}n (|a_n-A_n|+|b_n-B_n|)+|b_1-B_1|\leq \delta\right\},
\end{align*}
 $h$ and $g$ being given by \eqref{eq1.1}, then for the identity function $e(z)=z$, we immediately have $I_1(e)\subset \mathcal{F}_{H}\cap \mathcal{S}^{*}_{H}$ by applying Lemma \ref{lem1.2}.

\end{document}